\providecommand{\U}[1]{\protect\rule{.1in}{.1in}}
\newtheorem{theorem}{Theorem}
\newtheorem{corollary}[theorem]{Corollary}
\newtheorem{lemma}[theorem]{Lemma}
\newtheorem{proposition}[theorem]{Proposition}
\newtheorem{remark}[theorem]{Remark}
\numberwithin{equation}{section}
\numberwithin{example}{section}
\numberwithin{theorem}{section}
\newenvironment{proof}[1][Proof]{\noindent\textbf{#1.} }{\ \rule{0.5em}{0.5em}}
\begin{document}

\title{Almost sure convergence of the accelerated weight histogram algorithm}
\author{Henrik Hult and Guo-Jhen Wu}
\maketitle

\begin{abstract}
The accelerated weight histogram (AWH) algorithm is an iterative extended ensemble algorithm, developed for statistical physics and computational biology applications. It is used to estimate free energy differences and expectations with respect to Gibbs measures. The AWH algorithm is based on iterative updates of a design parameter, which is closely related to the free energy,  obtained by matching a weight histogram with a specified target distribution. The weight histogram is constructed from samples of a Markov chain on the product of the state space and parameter space. In this paper almost sure convergence of the AWH algorithm is proved, for estimating free energy differences as well as estimating expectations with adaptive ergodic averages. The proof is based on identifying the AWH algorithm as a stochastic approximation and studying the properties of the associated limit ordinary differential equation. 
\end{abstract}

\section{Introduction}

Consider a parameterized family of Gibbs measures which have densities $\{p_{X | \Lambda}( \cdot | \lambda), \lambda \in \mathcal{L}\}$ with respect to a common reference measure $\nu$ on a state space $\mathcal{X}$ given by
\begin{align}\label{Gibbs}
	p_{X | \Lambda}(x | \lambda) = \exp\{ - E(x, \lambda) + 	F(\lambda)\}, 
\end{align}
where $E(x, \lambda)$ is the energy, $F(\lambda)$  $= - \log \int \exp\{-E(x, \lambda)\} \nu(dx)$ is the free energy (i.e., $e^{-F(\lambda)}$ is the normalizing constant) and $\mathcal{L}$ is the parameter space. Such models originate from statistical physics, but are frequently encountered in various fields including Bayesian statistics, artificial intelligence and machine learning, computational biology, etc. The interests lie in computing free energy differences, $F(\lambda) - F(\sigma)$ with $\lambda,\sigma\in\mathcal{L}$, and
expectations with respect to $p_{X | \Lambda}(\cdot | \lambda)$ at a fixed parameter $\lambda \in \mathcal{L}$. In classical models of statistical physics, $\mathcal{X}$ represents the state space of a system of particles and the energy is commonly of the form $E(x, \lambda) = \lambda E(x)$, with $\lambda$ as the inverse temperature. More generally, the parameter $\lambda$ may represent a number of reaction coordinates. In statistical machine learning models, the parameter space $\mathcal{L}$ may be high-dimensional, consisting of millions of parameters. 

Sampling configurations according to a Gibbs measure is challenging because the free energy is unknown; the state space is often too large, which makes summation/integration over the state space infeasible. A widely used approach is Markov chain Monte Carlo (MCMC), where an ergodic Markov chain is constructed with $p_{X| \Lambda}(\cdot | \lambda)$ as its invariant distribution, see \cite{GRS96, AdFDI03, RC04, AG07}. The vanilla MCMC algorithms often suffer from slow mixing (slow relaxation) that reduces the effective number of samples and even may lead to biased results. In physical problems there are several potential reasons that lead to slow mixing, including critical slow down near points of second order phase transitions, nucleation associated with first order phase transitions and trapping in metastable states around local minima in complex energy landscapes. The latter problem is often encountered in  models of spin glasses \cite{SW86, BC92}, interacting spins in random fields \cite{MP92}, and heteropolymers \cite{UT96, HO99} as well as in complex statistical inference problems where models are highly non-Gaussian, see e.g.\ \cite{GRS96, RC04, RR04}.

Extended ensemble methods refer to computational methods where multiple copies of the state and parameter spaces are considered and a Markov chain on the extended space is constructed. Such methods have been developed since the mid 80's to overcome the problems of slow mixing due to nucleation and metastability. They include well known algorithms such as parallel tempering/replica exchange, see \cite{SW86}, and related exchange Monte Carlo methods, simulated tempering, see \cite{MP92}, multicanonical Monte Carlo (adaptive umbrella sampling), see \cite{M87, HvEK92, BN91, BN92, B00}, as well as the more recent infinite swapping algorithm, see \cite{DLP12},  and the accelerated weight histogram (AWH) algorithm introduced by Lidmar in \cite{LLH14}. More recent advances also include generative models from machine learning that can be trained to approximate samples from Gibbs distributions, see \cite{NOKW19}. 
 
The main idea of extended ensemble algorithms is to sample from artificial ensembles that are constructed as extensions or compositions
of the original ensemble. Fast mixing of the Markov chain in higher temperature (energy, etc.) components of the artificial ensembles greatly facilitate the mixing in other components. Averages over the original ensemble are calculated by marginalization (e.g.~parallel tempering), conditional
sampling (e.g.~simulated tempering), or, a reweighting procedure (e.g.~multicanonical
Monte Carlo, infinite swapping).  Improved mixing is not the only benefit of extended ensemble methods. They are particularly useful for estimating free energy differences $F(\lambda) - F(\sigma)$ for $\lambda, \sigma \in \mathcal{L}$ and  expectations $\int \psi(x) p_{X | \Lambda}(x | \lambda) \nu(dx)$ for a given function $\psi$ and parameter $\lambda \in \mathcal{L}$.  They are also useful for efficient sampling of rare events, and may be used to estimate the probability of configurations with small probability or transition probabilities between different metastable states.

The design of extended ensemble methods is intricate. There is a lot of flexibility in selecting the extended variables, the target density on the extended space, the transition kernel of the Markov chain as well as particular design parameters such as temperature selection and spacings in the parameter space. The current practice is to a large extent based on heuristics and experimental success. Methods based on the spectral gap to study convergence of ergodic Markov chains, as well as analysis of the asymptotic variance of particular observables, are generally insufficient to fully capture the effect of design parameters. Recent advances in the theory of large deviations for empirical measures of Markov chains shows promising results for studying the rate of convergence of weighted empirical averages towards population averages in the context of parallel tempering and infinite swapping, see \cite{DW20,DW21}.

Although large deviations theory may provide useful information for algorithm design, it is very challenging to make detailed calculations so that they provide optimal designs in complex models. For example, the problem of optimal temperature selection in an infinite swapping algorithm for the two-well potential has recently been worked out by Dupuis and Wu, see \cite{DW21}, and even such relatively simple model requires extensive and ingenious analysis. For a general multi-well potential, in order to make the calculations and analysis feasible, they have to comprise with a sub-optimal (but nearly optimal) design. It is our belief that including a learning mechanism is necessary in complex models, where design parameters may be updated adaptively during the simulation. 

An example of this learning mechanism is provided in the accelerated weight histogram method, see \cite{LLH14}, where an estimate of the free energy is updated iteratively during the simulation and the invariant density of the extended Markov chain is updated accordingly. Ultimately, this iterative procedure results in a fixed point method that can be analyzed using the theory of stochastic approximation (Robbins-Monro).  

The AWH algorithm has connections to the Wang-Landau algorithm, see \cite{WL01,BP07}, which can be viewed as an adaptive importance sampling algorithm belonging to the class of free energy biasing techniques, see e.g.\ \cite{LRS07}. The Wang-Landau method was originally developed for simulations in the closely related multicanonical ensemble, see \cite{ZM07}, but is straightforward to adapt to extended ensemble simulations, see \cite{BN92}. In the Wang-Landau algorithm the biasing factor is updated during the simulation to flatten the target probability, but the design is delicate since the adaption mechanism must decrease slowly with the number of simulations, as first observed in \cite{BP07}. By representing the Wang-Landau algorithm as a stochastic approximation, the almost sure convergence of the Wang-Landau algorithm is proved in \cite{FJKLS15}, using the general conditions provided in \cite{AMP05}. 

In contrast to the Wang-Landau algorithm, the AWH algorithm allows for large parameter steps by the use of a Gibbs sampler combined with a reweighting procedure that takes advantage of the information collected during the simulation. This allows for a rather densely spaced set of parameters, without being limited by slow diffusion. Moreover, the free energy parameters are updated based on a histogram of weights, rather than a histogram of visited parameter values, which is combined with the information collected during previous iterations. 

In this paper the almost sure convergence of the AWH algorithm is proved by identifying it as a stochastic approximation algorithm. Free energy differences are considered jointly with ergodic averages for fixed parameters. The stability issue is circumvented by assuming that the iterates take values in a compact set and almost sure convergence is proved by verifying general conditions for stochastic approximations with state-dependent noise in \cite[Chapter 6]{KY03}. The main technical difficulties lie in the identification of the limit ordinary differential equation (ODE) for the stochastic approximation algorithm, and the construction of appropriate Lyapunov functions for the limit ODE to characterize its limit set. Extensions to non-projected algorithms may also be treated, using the conditions in \cite{AMP05}.  

The paper is organized as follows. In Section \ref{sec:AWH} the AWH algorithm is presented in detail, explaining its use for estimating free energy differences and ergodic averages.  Section \ref{sec:convergence} contains the results on almost sure convergence of estimates of free energy differences and Section \ref{sec:erg}  extends the results to include the joint estimation of free energy differences and ergodic averages at fixed parameters.   The Appendix contains existing results on almost sure convergence of stochastic approximations that are used to prove the main results.

\section{The Accelerated Weight Histogram Algorithm} \label{sec:AWH}

In this section the AWH algorithm is presented in detail. It was first introduced in \cite{Lidmar12} to estimate the free energy differences and further developed for applications in computational biology in \cite{LLH14}. The design of an appropriate target distribution for the AWH algorithm is analyzed in \cite{LLH18}. 

Let $\mathcal{X}$ be the configuration space and $\mathcal{L}$ be
the parameter space. For simplicity, it is assumed that $\mathcal{{X}}$
and $\mathcal{L}$ are both finite sets. We are concerned with the setting where $|\mathcal{X}|$ is much larger than $|\mathcal{L}|$ and summation over $\mathcal{X}$ is infeasible, whereas summation over $\mathcal{L}$ is feasible. Consider a family of Gibbs measures $\{p_{X | \Lambda}( \cdot | \lambda), \lambda \in \mathcal{L}\}$ on $\mathcal{X}$, where
\begin{align*}
	p_{X | \Lambda}(x | \lambda) \doteq \exp\{ - E(x, \lambda) + 	F(\lambda)\}, 
\end{align*}
$E(x, \lambda)$ is the energy and $F(\lambda) = - \log \sum_{x \in \mathcal{X}} \exp\{-E(x, \lambda)\}$ is the unknown free energy.  In the accelerated weight histogram algorithm a joint distribution on the extended state space $\mathcal{X} \times \mathcal{L}$ is constructed as
\begin{align}\label{eq:pjoint}
	p_{X,\Lambda |\Theta}(x,\lambda | \theta) \propto \exp\{ -E(x, \lambda) + \theta(\lambda)\},
\end{align}
where $\theta: \mathcal{L} \to \mathbb{R}$ represents a design parameter, which ideally is equal to $F$ up to an additive constant. The joint distribution is obtained by specifying the marginal distribution on $\mathcal{L}$ as
\begin{align}\label{eq:plambda}
    	p_{\Lambda | \Theta}(\lambda | \theta) =  \frac{e^{\theta(\lambda)-F(\lambda)}}{\sum_{\sigma \in \mathcal{L}}  e^{\theta(\sigma)-F(\sigma)} }. 
\end{align}
Consequently, by Bayes' theorem, the conditional distribution of $\Lambda$ given $X$ and $\Theta$ is formulated by
\begin{align*}
	p_{\Lambda | X, \Theta}(\lambda | x, \theta) = \frac{e^{-E(x, \lambda) + \theta(\lambda)}}{\sum_{\sigma \in \mathcal{L}} e^{-E(x,\sigma)+\theta(\sigma)}}.
\end{align*}
Note that, by construction $p_{X|\Lambda, \Theta}(x | \lambda, \theta) = p_{X| \Lambda}(x|\lambda)$ since
\[
 p_{X|\Lambda , \Theta}(x|\lambda, \theta) = \frac{p_{X, \Lambda | \Theta}(x | \lambda, \theta)}{p_{\Lambda | \Theta}(\lambda | \theta)} = \exp\{-E(x,\lambda) + F(\lambda)\} = p_{X| \Lambda}(x|\lambda).
 \]
Additionally, throughout the paper, $\theta$ is called the design parameter, which can be thought of as an approximation of the free energy, not to be confused with $\lambda$, which is a parameter that of the energy function and denotes a point in the parameter space $\mathcal{L}$. 

Let $\rho$ be a probability measure on $\mathcal{L}$, called the target distribution, which is specified by the user. Throughout the paper, we only consider $\rho$ which satisfies $\rho(\lambda)>0$ for all $\lambda\in\mathcal{L}$. The aim of the AWH algorithm is to find the design parameter $\theta_*$ that makes 
\begin{align*}
  p_{\Lambda | \Theta}(\lambda | \theta_*) = \rho(\lambda), \quad \lambda \in \mathcal{L}. 
\end{align*}
We call such $\theta_*$ the optimal design parameter. With such $\theta_*$, free energy differences may be computed by
\begin{align}\label{eqn:free_energy_diff}
    F(\lambda) - F(\sigma) = \theta_*(\lambda)-\theta_*(\sigma) - \log \rho(\lambda) + \log \rho(\sigma)
\end{align}
since 
\[
    \frac{\rho(\lambda)}{\rho(\sigma)} = \frac{p_{\Lambda | \Theta}(\lambda | \theta_*)}{p_{\Lambda | \Theta}(\sigma | \theta_*)}
    = \frac{e^{ \theta_*(\lambda)-F( \lambda)}}{e^{ \theta_*(\sigma)-F(\sigma) }}
\]
for any $\lambda,\sigma\in\mathcal{L}$. However,  computing $\theta_*$ is equally difficult as computing the free energy, $F$.

In the AWH algorithm estimates of free energy differences are obtained from estimates of $\theta_*$. An iterative scheme is introduced where an estimate $\theta_n$ of $\theta_*$ is updated in each iteration using weight histograms, as described in the following. Let $W_{n,k}: \mathcal{L} \to [0,\infty)$, $n=0,\dots, N-1$, $k = 1, \dots, N_I$ be weight histograms with $W_{0,0}(\lambda) = 0$, $\lambda \in \mathcal{L}$. 
The algorithm is initiated with an initial estimate $\theta_0$ of the design parameter. A Markov chain  $\{(X_{k}, \Lambda_{k}), k \geq 0\}$ on the extended state space $\mathcal{X}\times\mathcal{L}$ is constructed to have $p_{X, \Lambda | \theta_0}$ as its unique invariant distribution. The Markov chain is simulated for $N_I$ time steps to produce updated weight histograms on the parameter space $\Lambda$ as follows:
\begin{align*}
    W_{0,k}(\lambda) = W_{0,k-1}(\lambda) + p_{\Lambda | X, \Theta}(\lambda | X_{k}, \theta_0), \quad \lambda \in \mathcal{L}, \; k =1, \dots, N_I. 
\end{align*}
An updated estimate $\theta_1$ of the optimal design parameter $\theta_*$ is obtained by matching the normalized weight histogram $W_{0,N_I}/N_I$ with the target distribution $\rho$, 
\begin{align*}
    \theta_{1}(\lambda) = \theta_0(\lambda) - \log\left(\frac{W_{0,N_I}(\lambda)}{N_I \rho(\lambda)}\right), \quad \lambda \in \mathcal{L}.
\end{align*}
The procedure is iterated until a termination criteria is fulfilled; in this paper the algorithm terminates after $N$ updates of the estimate of the optimal design parameter. A pseudo-code for the AWH algorithm is provided in Algorithm \ref{alg:pseudo} and additional details provided below. 

\begin{algorithm}[H] \label{alg:pseudo}
\SetAlgoLined
\KwResult{$\theta_{N}$}
 Initialize: $n = 0$, $X_0$, $\Lambda_0$, $\theta_0$, $\rho$, $W_{0,0}=0$ \;
 \While{$n < N$}{
    \For{$k=1,\dots,N_I$}{
    Sample $X_{nN_I+k}$ from $	q(X_{nN_I+k-1}, x |  \Lambda_{n N_I + k-1})$ \; 
    Sample $\Lambda_{nN_I+k}$ from $p_{\Lambda  | X,\Theta}(\lambda | X_{nN_I+k}, \theta_n)$\;
    Update $W_{n,k}(\lambda) \leftarrow W_{n,k-1}(\lambda) + p_{\Lambda | X, \Theta}(\lambda | X_{nN_I+k}, \theta_n) 
    $, $\lambda \in \mathcal{L}$\;
 }
 Update $\theta_{n+1}(\lambda) \leftarrow \theta_n(\lambda) - \log\left(\frac{W_{n,N_I}(\lambda)}{(n+1) N_I \rho(\lambda)}\right)$, $\lambda \in \mathcal{L}$ \;
 Set $W_{n+1,0}(\lambda) \leftarrow (n+1) N_I \rho(\lambda)$, $\lambda \in \mathcal{L}$ \;
 Update $n \leftarrow n+1$\;
 }
 \caption{Accelerated weight histogram}
\end{algorithm}

In the $n$th iteration the sampling steps aim to sample the Markov chain $\{(X_{nN_I+k}, \Lambda_{nN_I+k}),k\geq 1\}$ with invariant distribution $p_{X,\Lambda | \Theta}(x,\lambda | \theta_n)$. The two sampling steps in Algorithm \ref{alg:pseudo} form a Gibbs sampler with the desired invariant distribution. Note that, in the first sampling step, one would ideally sample $X_{nN_I+k}$ from $p_{X | \Lambda}(x | \Lambda_{n N_I + k-1})$, but this is difficult since the free energy $F$ is unknown. Therefore the ideal step is replaced by sampling from a transition probability $q(X_{nN_I + k-1}, x | \Lambda_{nN_I+k-1})$ whose invariant distribution is $p_{X | \Lambda}(x | \Lambda_{n N_I + k-1})$.  That is,  $q( x,y| \lambda)$ satisfies
\begin{align}\label{eq:invariance}
    p_{X | \Lambda}(y | \lambda) = \sum_{x \in \mathcal{X}} q(x,y | \lambda) p_{X | \Lambda}(x | \lambda), \quad y \in \mathcal{X}, \lambda \in \mathcal{L}. 
\end{align}
Moreover, we assume that for any $\lambda\in\mathcal{L}$, 
\begin{align}
    \label{eq:irr}
    \{q(x,y | \lambda)\}_{x,y\in\mathcal{X}}, \; \text{is irreducible and aperiodic.}
\end{align}
This assumption will come in handy later on in the proof of almost sure convergence, and the assumption can be fulfilled easily in an implementation.

In contrast, since we only consider the case when $|\mathcal{L}|$ is not too large, the normalizing constant $\sum_{\sigma \in \mathcal{L}} e^{-E(x,\sigma) + \theta_n(\sigma)}$ can be explicitly computed in this case, which means that sampling $\Lambda_{nN_I+k}$ from $p_{\Lambda  | X,\Theta}(\lambda | X_{nN_I+k}, \theta_n)$ is straightforward.
\subsection{Estimation of free energy differences}
For $n\geq 1$, the weight histogram can be written as 
\begin{align*}
    W_{n,k}(\lambda) = W_{n,0}(\lambda) + \sum_{j=1}^k p_{\Lambda | X, \Theta}(\lambda | X_{nN_I+j}, \theta_n), \quad \lambda \in \mathcal{L}, k = 1\dots, N_I.
\end{align*}
For large $N_I$ the normalized weight histogram approximates a convex combination of the target density $\rho(\lambda)$ and the marginal density $p_{\Lambda | \Theta}(\lambda | \theta_n)$. Indeed, 
\begin{align*}
\frac{1}{(n+1)N_I}W_{n,N_I}(\lambda) &=\frac{n}{n+1}\rho(\lambda) + \frac{1}{(n+1) N_I}\sum_{k=1}^{N_I} p_{\Lambda | X, \Theta}(\lambda|X_{nN_I+k}, \theta_n)  \\ &
\rightarrow \frac{n}{n+1} \rho(\lambda) + \frac{1}{n+1}\sum_{x\in\mathcal{X}}p_{\Lambda | X, \Theta}(\lambda | x, \theta_n) p_{X | \Theta}(x | \theta_n) \\ &= \frac{n}{n+1}\rho(\lambda) + \frac{1}{n+1} p_{\Lambda | \Theta}(\lambda | \theta_n), \quad \lambda \in \mathcal{L},
\end{align*}
where the convergence is almost surely, as $N_I \rightarrow\infty$ , by the ergodic theorem. The updating scheme of the design parameter is then approximately
\begin{align*}
    \theta_{n+1}(\lambda) &= \theta_n(\lambda) -\log\left(\frac{W_{n,N_I}(\lambda)}{(n+1)N_I \rho(\lambda)}\right)
    \\
    &\approx \theta_n(\lambda) - \log\left(1-\frac{1}{n+1}\left(1-\frac{p_{\Lambda | \Theta}(\lambda | \theta_n)}{\rho(\lambda)}\right) \right), 
        \quad \lambda \in \mathcal{L},
\end{align*}
which has a fixed point at $\theta_*$ such that $p_{\Lambda | \Theta}(\lambda | \theta_*) = \rho(\lambda)$. Thus, $\theta_n$ computed by the AWH algorithm can be identified as a stochastic approximation of the optimal design parameter $\theta_*$. 

After termination of the AWH algorithm, e.g., after $N$ iterations, estimates of free energy differences $F(\lambda) - F(\sigma)$ may be computed as
\begin{align*}
    \theta_{N}(\lambda) - \theta_N(\sigma) - \log \rho(\lambda) + \log \rho(\sigma) ,\quad \lambda, \sigma \in \mathcal{L},
\end{align*}
since \eqref{eqn:free_energy_diff} and $\theta_N$ is an estimate of $\theta_*$. This is one of the main use of the AWH algorithm in \cite{Lidmar12}.
 
\subsection{Adaptive estimation of expectations with ergodic averages}\label{subsec:2.2}
In addition to estimating free energy differences, the AWH algorithm may also be used to estimate expectations at a fixed parameter value $\lambda \in \mathcal{L}$, 
\begin{align*}
    \sum_{x\in\mathcal{X}}\psi(x)p_{X | \Lambda}(x|\lambda),
\end{align*}
for some function $\psi:\mathcal{X}\rightarrow\mathbb{R}$. At the $n$th iteration, given design parameter $\theta_{n}$, the Markov chain $\{(X_{nN_I + k},\Lambda_{nN_I + k}), k=1, \dots, N_I\}$ has invariant distribution  $p_{X,\Lambda | \Theta}(x,\lambda | \theta_n)$ given by \eqref{eq:pjoint}. By the ergodic theorem, as $k \to \infty$,
\begin{align*}
\frac{1}{k}\sum_{j=1}^{k}\psi(X_{nN_I + j})p_{\Lambda | X, \Theta}(\lambda | X_{nN_I + j}, \theta_n)  &\rightarrow\sum_{x\in\mathcal{X}}\psi(x)p_{\Lambda | X, \Theta}(\lambda | x, \theta_n) p_{X | \Theta}(x | \theta_n) \\ &=\sum_{x\in\mathcal{X}}\psi(x)p_{X, \Lambda | \Theta}(x,\lambda | \theta_n),
\end{align*}
and 
\begin{align*}
\frac{1}{k}\sum_{j=1}^{k}p_{X | \Lambda, \Theta}(\lambda | X_{nN_I +j}, \theta_n) & \rightarrow\sum_{x\in\mathcal{X}}p_{\Lambda | X,\Theta}(\lambda | x, \theta_n)p_{X| \Theta}(x | \theta_n) = p_{\Lambda | \Theta}(\lambda | \theta_n).
\end{align*}
These two limits imply that, as $k\to \infty$, 
\begin{align*}
\frac{\sum_{j=1}^{k}\psi(X_{nN_I + j})p_{\Lambda | X, \Theta}(\lambda | X_{nN_I + j}, \theta_n)}{\sum_{j=1}^{k}p_{X | \Lambda, \Theta}(\lambda | X_{nN_I +j}, \theta_n)}  &\rightarrow \frac{\sum_{x\in\mathcal{X}}\psi(x)p_{X, \Lambda | \Theta}(x,\lambda | \theta_n)}{p_{\Lambda | \Theta}(\lambda | \theta_n)} \\ & = \sum_{x\in\mathcal{X}}\psi(x)p_{X | \Lambda}(x | \lambda).
\end{align*}
Thus, it is in principle possible to estimate the expectation on the right side in the last display by the left side in the last display for some large $k\in\mathbb{N}$. Such estimate is inefficient since it would require fixing $\theta_n$ and let the Markov chain run for a long time without updating $\theta_n$, not taking advantage of the possible improved estimates of the optimal design parameter along the simulation. A more efficient estimate is to keep updating $\theta_n$ according to the AWH algorithm and use the estimate 
\begin{align} \label{eq:ergest2}
\frac{\sum_{n=0}^{N-1}\sum_{j=1}^{N_I}\psi(X_{nN_I + j})p_{\Lambda | X, \Theta}(\lambda | X_{nN_I + j}, \theta_n)}{\sum_{n=0}^{N-1}\sum_{j=1}^{N_I}p_{X | \Lambda, \Theta}(\lambda | X_{nN_I +j}, \theta_n)}. 
\end{align}
As $\theta_n$ converges to the optimal $\theta_*$, it seems plausible that $\theta_n$ will essentially remain constant for large values of $n$ and the estimate in \eqref{eq:ergest2} will converge to $\sum_{x\in\mathcal{X}}\psi(x)p_{X | \Lambda}(x | \lambda)$ almost surely. This is indeed the case, as will be demonstrated in Section \ref{sec:erg}.  The proof is based on studying the convergence an extended stochastic approximation of $\theta_*$ and integrals of the form $\sum_{x\in\mathcal{X}} \psi(x)p_{X, \Lambda |\Theta}(x, \lambda | \theta_*)$.

\section{Almost Sure Convergence of Free Energy Differences}
\label{sec:convergence}

 In this section almost sure convergence of the AWH algorithm for estimating free energy differences will be proved. The proof is based on the ODE approach for proving almost sure convergence of projected stochastic approximation algorithms, as presented in \cite{KY03} and explained in more detail in the Appendix. In this approach, originally introduced by Ljung \cite{ljung77}, it is demonstrated that the noise averages out so that, asymptotically, the stochastic approximation algorithm may be interpreted as a discretization of a limit ODE. Consequently, the set of limit points of the stochastic approximation algorithm is a subset of the limit points of the limit ODE, which may be identified as a subset of the level set of $0$ of a properly constructed Lyapunov function.

 Our proof relies on identifying the AWH algorithm as a stochastic approximation algorithm, identification of the associated limit ODE, the construction of an associated Lyapunov function, and verifying the appropriate set of conditions for almost sure convergence. 
 
 Let us begin by identifying the AWH algorithm as a stochastic approximation.  Recall that the recursion in the AWH algorithm is given by
\begin{align*}
\theta_{n+1}(\lambda) & =\theta_{n}(\lambda)-\log\left(\frac{W_{nN_I}(\lambda)}{(n+1)N_I\rho(\lambda)}\right)\\
 & =\theta_{n}(\lambda)-\log\left(1-\frac{1}{n+1}+\frac{1}{n+1}\frac{\sum_{k=1}^{N_I} p_{\Lambda | X, \Theta}(\lambda | X_{nN_I+k}, \theta_n)}{N_I\rho(\lambda)}\right).
\end{align*}
Since $\log(1+x) = x + o(|x|)$ the recursion can be written as
\begin{align*}
& \theta_{n+1}(\lambda) = 
\theta_{n}(\lambda) +\frac{1}{n+1}\left(1- \frac{\sum_{k=1}^{N_I} p_{\Lambda | X, \Theta}(\lambda | X_{nN_I+k}, \theta_n)}{N_I\rho(\lambda)}+ \beta_n(\lambda) \right), 
\end{align*}
$\lambda \in \mathcal{L}$, where $\beta_n(\lambda) = o(1)$ almost surely. To guarantee realistic estimates it is practical to project the updates on a hyper-rectangle $H$ containing the origin. The projected algorithm circumvents issues of stability and guarantees that estimates do not explode. The recursion then becomes, 
\begin{align} \label{eq:rec_prel}
& \theta_{n+1}(\lambda) \! =\! 
\theta_{n}(\lambda)\! +\! \frac{1}{n+1}\left(\!1- \frac{\sum_{k=1}^{N_I} p_{\Lambda | X, \Theta}(\lambda | X_{nN_I+k}, \theta_n)}{N_I\rho(\lambda)} \!+\! \beta_n(\lambda) \! +\! Z_n(\lambda)\!\right), 
\end{align}
where $Z_n(\lambda)$ denotes the projection term due to the constraint to $H$. The exact definition of the projection term can be found in \cite[Section 4.3]{KY03}. To sample $X_{nN_I+k}$, $k=1, \dots, N_I$, a Gibbs sampler is used by alternating between sampling $X_{nN_I+k}$ from $q(X_{nN_I+k-1}, x |  \Lambda_{n N_I + k-1})$ and $\Lambda_{nN_I+k}$ from $p_{\Lambda  | X,\Theta}(\lambda | X_{nN_I+k}, \theta_n)$, where $q(x,y |  \lambda)$ has the desired invariant distribution, that is, 
\begin{align*}
    p_{X | \Lambda}(y | \lambda) = \sum_{x \in \mathcal{X}} q(x,y |  \lambda) p_{X | \Lambda}(x | \lambda), \quad y \in \mathcal{X}, \lambda \in \mathcal{L}. 
\end{align*} 
For $n \geq 1$, let $\theta_n = (\theta_n(\lambda):\lambda\in\mathcal{L})$, $\varepsilon_n=1/(n+1)$%$\xi_n = (\xi^X_{n},\xi^{\Lambda}_n)$ with 
\[
    \xi_n = (X_{(n-1)N_I + 1},\dots, X_{n N_I},\Lambda_{(n-1)N_I + 1}, \dots, \Lambda_{n N_I}),
\] and $h: \mathbb{R}^{|\mathcal{L}|} \times (\mathcal{X}^{N_I} \times\mathcal{L}^{N_I}) \to \mathbb{R}^{|\mathcal{L}|}$ be given by
\begin{align*}
    h(\theta,\xi)(\lambda) = 1-\frac{\sum_{k=1}^{N_I} p_{\Lambda | X, \Theta}(\lambda | \xi({k}), \theta)}{N_I\rho(\lambda)},
\end{align*}
where $\xi(k)$ denotes the $k$th component of the vector $\xi$. Notice that $\xi_n(k) = X_{(n-1)N_I+k}$ and $\xi_n(N_I+k) = \Lambda_{(n-1)N_I+k}$ for $k=1,\dots,N_I$.
The recursion \eqref{eq:rec_prel} can then be stated as
\begin{align*}
   \theta_{n+1} &= \theta_{n} + \varepsilon_n\left( h(\theta_n, \xi_{n+1}) + \beta_n + Z_n\right)\\
   &= \theta_{n} + \varepsilon_n Y_n + \varepsilon_n Z_n,
\end{align*}
where $Y_n = h(\theta_n, \xi_{n+1}) + \beta_n$. Let $\mathcal{F}_n=\sigma(\theta_0,Y_0,\dots,Y_{n-1},\xi_1,\dots,\xi_n)$. Then, $P(\xi_{n+1}\in\cdot|\xi_i,\theta_i,i\leq n) = p(\xi_n,\cdot|\theta_n)$, where $p(\xi,\cdot|\theta)$ denotes the one-step transition probability with starting point $\xi$, parameterized by $\theta$, given by
\begin{align} \label{eq:transitionprob}
    p(\xi, \eta | \theta) &= 
    %P(X_{N_I+1} = \eta^X_1, \dots, X_{2N_I} = \eta^X_{N_I}, \Lambda_{N_I+1} = \eta^\Lambda_{1}, \dots, \Lambda_{2N_I} = \eta^\Lambda_{N_I} \nonumber  \\ & \qquad | X_{1} = \xi^X_1, \dots, X_{N_I} = \xi^X_{N_I}, \Lambda_{1} = \xi^\Lambda_{1}, \dots, \Lambda_{N_I} = \xi^\Lambda_{N_I}) \nonumber \\ & = 
    \prod_{k=1}^{N_I} q\left(\eta(k-1),\eta(k)|  \hat\eta(N_I+k -1)\right) p_{\Lambda | X, \Theta}\left(\eta(N_I+k) | \eta(k), \theta\right),    
\end{align}
where $\eta(0) = \xi(N_I)$, $\hat\eta(N_I) = \xi(2N_I)$ and $\hat\eta(k) = \eta(k)$ for all $k\neq N_I$. Furthermore, 
\begin{align*}
    E_n [h(\theta_n,\xi_{n+1})] 
    = E[h(\theta_n,\xi_{n+1})|\mathcal{F}_n] 
    = \sum_{\eta} h(\theta_n,\eta)p(\xi_n,\eta|\theta_n) = g(\theta_n,\xi_n),
\end{align*}
where $E_n$ denotes the expectation conditioned on $\mathcal{F}_n$ and 
\[
    g(\theta,\xi)\doteq \sum_{\eta} h(\theta,\eta)p(\xi,\eta|\theta).
\]
Since $\rho(\lambda)>0$ for all $\lambda\in\mathcal{L}$, we find that $h$ is bounded which implies $E|Y_n|<\infty$ and
\[
    E_n Y_n = g(\theta_n,\xi_n) + \beta_n.
\]
For convenience and because $E_n\beta_n=o(1)$, here we use $\beta_n$ to denote $E_n\beta_n$.

By introducing 
\begin{align*}
    \delta M_n 
    &= Y_n -E_n Y_n = h(\theta_n,\xi_{n+1}) - g(\theta_n,\xi_n)\\
    &= h(\theta_n,\xi_{n+1}) - \sum_{\eta} h(\theta_n,\eta)p(\xi_n,\eta|\theta_n)\\
    &= \sum_{\eta} [h(\theta_n,\xi_{n+1})-h(\theta_n,\eta)]p(\xi_n,\eta|\theta_n),
\end{align*}
the recursion \eqref{eq:rec_prel} may be restated as, 
\[
    \theta_{n+1} = \theta_n + \varepsilon_n [E_n Y_n+\delta M_n+Z_n], 
\] or equivalently,
\begin{align}
\label{eq:recursion}
    \theta_{n+1} = \theta_n + \varepsilon_n [g(\theta_n,\xi_n)+\delta M_n+\beta_n+Z_n],
\end{align}
which is a standard form of a stochastic approximation algorithm with state-dependent noise, see \cite[Section 6.6]{KY03}. 

\subsection{Identification of the limit ODE}
The limit ODE is a projected ODE of the form $\dot \theta  = \bar g(\theta) + z$,  where $z$ is a projection term, that approximates the asymptotic behaviour of the stochastic recursion \eqref{eq:recursion}. To identify $\bar g$ a first step is to assume that the noise $\delta M_n$ and bias $\beta_n$ vanishes asymptotically. Moreover, since $g(\theta,\xi) =  \sum_{\eta} h(\theta,\eta)p(\xi,\eta|\theta)$ and $\theta_n$ varies slowly with $n$, the intuition is that, for large $n$ and $\theta_n \approx \theta$, the Markov chain $\xi_n$ is approximately distributed according to the invariant distribution of the transition probability $p(\xi, \cdot\, | \theta)$. Therefore, a candidate for $\bar g$ is  $E[h(\theta,\xi_1)(\lambda) | \theta_0=\theta]$, where $\xi_1 = (X_{1}, \dots, X_{N_I}, \Lambda_1, \dots, \Lambda_{N_I})$ depends on $\theta_0$ and the expectation is taken under $X_0\overset{d}{=}p_{X | \Theta}(\cdot | \theta)$.%the invariant distribution of $\Xi$ given $\Theta = \theta$. 

Let $\bar g: \mathbb{R}^{|\mathcal{L}|} \to \mathbb{R}^{|\mathcal{L}|}$ be given by
\begin{align} \label{eq:gbar}
\bar g(\theta)(\lambda) &= 1-\frac{p_{\Lambda | \Theta}(\lambda | \theta)}{\rho(\lambda)}. 
\end{align}
The expression for $\bar g$ is motivated by the following lemma that expresses $\bar g(\theta)$ as the ergodic average of $h(\theta,\xi_n)$, for fixed $\theta$. %That is, the expectation over $\xi = (X_{1}, \dots, X_{N_I}, \Lambda_1, \dots, \Lambda_{N_I})$ where $X_{0}$ is sampled from $p_{X | \Theta}(\cdot | \theta)$. 
\begin{lemma} \label{lem:gbar}
Let $\bar g$ be given by \eqref{eq:gbar}. For $\theta \in \mathbb{R}^{|\mathcal{L}]}$ and $\lambda\in  \mathcal{L}$, 
$\bar g(\theta)(\lambda) =  E[h(\theta,\xi_1)(\lambda) | \theta_0=\theta]$. %where the expectation is taken under the invariant distribution of $\xi$ given $\Theta = \theta$. 
\end{lemma}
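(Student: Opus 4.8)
The plan is to reduce the lemma to the marginalization identity already established in the paper, via a stationarity argument for the Gibbs sampler. Expanding the definition of $h$ and using linearity of expectation,
$E[h(\theta,\xi_1)(\lambda)\mid\theta_0=\theta] = 1 - \frac{1}{N_I\rho(\lambda)}\sum_{k=1}^{N_I} E[p_{\Lambda\mid X,\Theta}(\lambda\mid X_k,\theta)]$, since $\xi_1(k)=X_k$ for $k=1,\dots,N_I$. Thus the whole statement reduces to showing $E[p_{\Lambda\mid X,\Theta}(\lambda\mid X_k,\theta)] = p_{\Lambda\mid\Theta}(\lambda\mid\theta)$ for each $k$, where the expectation is taken with $(X_0,\Lambda_0)$ drawn from the joint invariant law $p_{X,\Lambda\mid\Theta}(\cdot,\cdot\mid\theta)$, whose $X$-marginal is $p_{X\mid\Theta}(\cdot\mid\theta)$.

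First I would verify that one step of the Gibbs sampler leaves $\pi\doteq p_{X,\Lambda\mid\Theta}(\cdot,\cdot\mid\theta)$ invariant, where the step updates $X$ via $q(\cdot,\cdot\mid\Lambda)$ with $\Lambda$ held fixed and then updates $\Lambda$ via $p_{\Lambda\mid X,\Theta}(\cdot\mid X,\theta)$. Summing the joint kernel against $\pi$, the $\Lambda$-update factor $p_{\Lambda\mid X,\Theta}(\lambda\mid x,\theta)$ pulls out, and the inner sum over the previous state uses the factorization $\pi(x',\lambda') = p_{X\mid\Lambda}(x'\mid\lambda')\,p_{\Lambda\mid\Theta}(\lambda'\mid\theta)$, which is valid precisely because $p_{X\mid\Lambda,\Theta}(x\mid\lambda,\theta)=p_{X\mid\Lambda}(x\mid\lambda)$ as noted in Section \ref{sec:AWH}. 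Applying the invariance property \eqref{eq:invariance} of $q$, namely $\sum_{x'} q(x',x\mid\lambda')\,p_{X\mid\Lambda}(x'\mid\lambda') = p_{X\mid\Lambda}(x\mid\lambda')$, collapses the inner sum to $\pi(x,\lambda')$; summing out $\lambda'$ yields $p_{X\mid\Theta}(x\mid\theta)$, and reattaching the $\Lambda$-update factor returns $\pi(x,\lambda)$.

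Having established invariance, starting from $(X_0,\Lambda_0)\sim\pi$ gives $(X_k,\Lambda_k)\sim\pi$ for every $k$, so in particular $X_k\overset{d}{=}p_{X\mid\Theta}(\cdot\mid\theta)$. Then for each $k$,
$E[p_{\Lambda\mid X,\Theta}(\lambda\mid X_k,\theta)] = \sum_{x\in\mathcal{X}} p_{\Lambda\mid X,\Theta}(\lambda\mid x,\theta)\,p_{X\mid\Theta}(x\mid\theta) = p_{\Lambda\mid\Theta}(\lambda\mid\theta)$, which is exactly the marginalization identity used in Sections 2.1--2.2. Substituting back, the $N_I$ identical terms cancel against the $N_I$ in the denominator and leave $1 - p_{\Lambda\mid\Theta}(\lambda\mid\theta)/\rho(\lambda) = \bar g(\theta)(\lambda)$, as claimed.

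The main obstacle is the invariance computation in the second step: one must be careful that the two-substep update genuinely preserves the joint law, given that the $X$-substep samples from a kernel $q$ whose invariant law is only the conditional $p_{X\mid\Lambda}(\cdot\mid\lambda)$ rather than from that conditional directly. One must also fix the initialization convention so that $(X_0,\Lambda_0)$ is stationary (equivalently, $X_0\overset{d}{=}p_{X\mid\Theta}(\cdot\mid\theta)$ together with $\Lambda_0\sim p_{\Lambda\mid X,\Theta}(\cdot\mid X_0,\theta)$), ensuring the marginal claim $X_k\overset{d}{=}p_{X\mid\Theta}(\cdot\mid\theta)$ holds already at $k=1$ and not merely asymptotically. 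Once the factorization of $\pi$ is combined with \eqref{eq:invariance}, the remainder is routine bookkeeping.
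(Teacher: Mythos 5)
Your proof is correct, and it takes a cleaner route than the paper's. The paper expands $E[h(\theta,\xi_1)(\lambda)\mid\theta_0=\theta]$ as an explicit $2(N_I+1)$-fold sum $S_{N_I}$ and proves $S_n = 1 - p_{\Lambda\mid\Theta}(\lambda\mid\theta)/\rho(\lambda)$ by induction on $n$, splitting $S_{n+1}$ into a piece $\bar S_{n+1}$ coming from the $k=1$ summand and a piece $\hat S_{n+1}$ that, after one application of \eqref{eq:invariance} and marginalization over $\lambda_0,x_0$, reduces to $\frac{n}{n+1}S_n$. Your argument isolates the fact that is implicit in that inductive step --- namely that the two-substep Gibbs kernel $K((x',\lambda'),(x,\lambda)) = q(x',x\mid\lambda')\,p_{\Lambda\mid X,\Theta}(\lambda\mid x,\theta)$ preserves $\pi = p_{X,\Lambda\mid\Theta}(\cdot,\cdot\mid\theta)$ --- and then uses stationarity to conclude that each of the $N_I$ summands contributes the identical quantity $p_{\Lambda\mid\Theta}(\lambda\mid\theta)$, so the average is trivial. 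The underlying computation (factorize $\pi(x',\lambda')$ as $p_{X\mid\Lambda}(x'\mid\lambda')p_{\Lambda\mid\Theta}(\lambda'\mid\theta)$, apply \eqref{eq:invariance} to collapse the sum over $x'$, marginalize $\lambda'$) is the same as the paper's treatment of $S_1$ and $\hat S_{n+1}$, but your packaging is more modular and makes the role of the initialization transparent: the paper's weighting $p_{X\mid\Lambda}(x_0\mid\lambda_0)p_{\Lambda\mid\Theta}(\lambda_0\mid\theta)$ is exactly your stationary start $(X_0,\Lambda_0)\sim\pi$, consistent with the convention $X_0\overset{d}{=}p_{X\mid\Theta}(\cdot\mid\theta)$ stated before the lemma. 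Your version also has the advantage of generalizing immediately to Lemma \ref{lem:gbar2}, where the paper again resorts to ``induction with minor modifications.''
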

\begin{proof}
For each $\lambda \in \mathcal{L}$, the expectation on the right hand side can be written as
\begin{align*}
    & E[h(\theta,\xi_1)(\lambda) | \theta_0=\theta]   \\
    %&= E[h(\theta, (X_1, \dots, X_{N_I}, \Lambda_1, \dots, \Lambda_{N_I}))(\lambda) | \theta_0=\theta] \\
    &= \sum_{x_0 \in \mathcal{X}} E[h(\theta, (X_1, \dots, X_{N_I}, \Lambda_1, \dots, \Lambda_{N_I}))(\lambda) | \theta_0=\theta, X_0 = x_0]  p_{X | \Theta}(x_0 | \theta) \\
 %   &= \sum_{x_0 \in \mathcal{X}}\sum_{\lambda_0\in \mathcal{L}} E[h(\theta, (X_1, \dots, X_{N_I}, \Lambda_1, \dots, \Lambda_{N_I}))(\lambda) | \theta, X_0 = x_0] \\ & \qquad \times p_{X,\Lambda | \Theta}(x_0, \lambda_0 | \theta) \\
    &= \sum_{x_0 \in \mathcal{X}}\sum_{\lambda_0\in \mathcal{L}} E[h(\theta, (X_1, \dots, X_{N_I}, \Lambda_1, \dots, \Lambda_{N_I}))(\lambda) | \theta_0=\theta, X_0 = x_0]  \\ & \qquad \times  p_{X | \Lambda}(x_0 |  \lambda_0)p_{\Lambda | \Theta}(\lambda_0 | \theta)
    \\ &=  \sum_{x_0 \in \mathcal{X}}\cdots\sum_{x_{N_I} \in \mathcal{X}} 
    \sum_{\lambda_0 \in \mathcal{L}}\cdots\sum_{\lambda_{N_I} \in \mathcal{L}}
    h(\theta, (x_1, \dots, x_{N_I}, \lambda_1, \dots, \lambda_{N_I}))(\lambda) \\ & \qquad \times \left(\prod_{k=1}^{N_I}q(x_{k-1},x_{k} |  \lambda_{k-1})p_{\Lambda | X, \Theta}(\lambda_k | x_k, \theta) \right)
    p_{X | \Lambda}(x_0 |  \lambda_0)p_{\Lambda | \Theta}(\lambda_0 | \theta) \\
    &= 
    \sum_{x_0 \in \mathcal{X}} \cdots\sum_{x_{N_I} \in \mathcal{X}}
    \sum_{\lambda_0 \in \mathcal{L}}\cdots\sum_{\lambda_{N_I} \in \mathcal{L}}  \left(1-
    \frac{\sum_{k=1}^{N_I}  p_{\Lambda | X, \Theta}(\lambda | x_{k}, \theta)}{N_I\rho(\lambda)}\right)  
    \\ & \qquad \times \left(\prod_{k=1}^{N_I}q(x_{k-1},x_{k} |  \lambda_{k-1})p_{\Lambda | X, \Theta}(\lambda_k | x_k, \theta) \right)
    p_{X | \Lambda}(x_0 |  \lambda_0)p_{\Lambda | \Theta}(\lambda_0 | \theta)
    \end{align*}
We denote the last term in the previous display by $S_{N_I}$. It remains to show that for any $n\in\mathbb{N}$, 
\[
    S_{n}= 1-\frac{p_{\Lambda | \Theta}(\lambda | \theta)}{\rho(\lambda)}.
\]
For $n=1$, 
\begin{align*}
    S_1 &= \sum_{x_0 \in \mathcal{X}} \sum_{x_1 \in \mathcal{X}}
    \sum_{\lambda_0 \in \mathcal{L}}\sum_{\lambda_1 \in \mathcal{L}}  \left(1-
    \frac{  p_{\Lambda | X, \Theta}(\lambda | x_1, \theta)}{\rho(\lambda)}\right)  
     q(x_{0},x_{1} |  \lambda_{0})
     \\& \qquad\times p_{\Lambda | X, \Theta}(\lambda_1 | x_1, \theta) 
    p_{X | \Lambda}(x_0 |  \lambda_0)p_{\Lambda | \Theta}(\lambda_0 | \theta)\\
    & =  \sum_{x_1 \in \mathcal{X}}
    \sum_{\lambda_0 \in \mathcal{L}} \left(1-
    \frac{  p_{\Lambda | X, \Theta}(\lambda | x_1, \theta)}{\rho(\lambda)}\right) 
    p_{X | \Lambda}(x_1 |  \lambda_0)p_{\Lambda | \Theta}(\lambda_0 | \theta)\\
    & =  \sum_{x_1 \in \mathcal{X}}
     \left(1-\frac{  p_{\Lambda | X, \Theta}(\lambda | x_1, \theta)}{\rho(\lambda)}\right) p_{X | \Theta}(x_1 |  \theta)\\
     & = 1-\frac{p_{\Lambda | \Theta}(\lambda | \theta)}{\rho(\lambda)},
\end{align*}
where the second equation holds since we execute the summation over $x_0$ first with \eqref{eq:invariance} and then we take summation over $\lambda_1$.

Next, if we assume that 
\[
     S_{n}= 1-\frac{p_{\Lambda | \Theta}(\lambda | \theta)}{\rho(\lambda)}
\]
is true, then 
\begin{align*}
    S_{n+1} &= \sum_{x_0 \in \mathcal{X}} \cdots\sum_{x_{n+1} \in \mathcal{X}}
    \sum_{\lambda_0 \in \mathcal{L}}\cdots\sum_{\lambda_{n+1} \in \mathcal{L}}  \left(
    \frac{\sum_{k=1}^{n+1}  [\rho(\lambda)-p_{\Lambda | X, \Theta}(\lambda | x_{k}, \theta)]}{(n+1)\rho(\lambda)}\right)  
    \\ & \qquad \times \left(\prod_{k=1}^{n+1}q(x_{k-1},x_{k} |  \lambda_{k-1})p_{\Lambda | X, \Theta}(\lambda_k | x_k, \theta) \right)
    p_{X | \Lambda}(x_0 |  \lambda_0)p_{\Lambda | \Theta}(\lambda_0 | \theta)\\
    & = \bar S_{n+1} + \hat S_{n+1},
\end{align*}
where 
\begin{align*}
    \bar S_{n+1} &\doteq \sum_{x_0 \in \mathcal{X}} \cdots\sum_{x_{n+1} \in \mathcal{X}}
    \sum_{\lambda_0 \in \mathcal{L}}\cdots\sum_{\lambda_{n+1} \in \mathcal{L}}  \left(
    \frac{  \rho(\lambda)-p_{\Lambda | X, \Theta}(\lambda | x_1, \theta)}{(n+1)\rho(\lambda)}\right)  
    \\ & \qquad \times\left( \prod_{k=1}^{n+1}q(x_{k-1},x_{k} |  \lambda_{k-1})p_{\Lambda | X, \Theta}(\lambda_k | x_k, \theta) \right)
    p_{X | \Lambda}(x_0 |  \lambda_0)p_{\Lambda | \Theta}(\lambda_0 | \theta)
\end{align*}
and 
\begin{align*}
    \hat S_{n+1} &\doteq \sum_{x_0 \in \mathcal{X}} \cdots\sum_{x_{n+1} \in \mathcal{X}}
    \sum_{\lambda_0 \in \mathcal{L}}\cdots\sum_{\lambda_{n+1} \in \mathcal{L}}  \left(
    \frac{\sum_{k=2}^{n+1}  [\rho(\lambda)-p_{\Lambda | X, \Theta}(\lambda | x_{k}, \theta)]}{(n+1)\rho(\lambda)}\right)  
    \\ & \qquad \times \left(\prod_{k=1}^{n+1}q(x_{k-1},x_{k} |  \lambda_{k-1})p_{\Lambda | X, \Theta}(\lambda_k | x_k, \theta) \right)
    p_{X | \Lambda}(x_0 |  \lambda_0)p_{\Lambda | \Theta}(\lambda_0 | \theta).
\end{align*}
For $\bar S_{n+1}$, we first observe that
\begin{align*}
    \sum_{x_2 \in \mathcal{X}} \cdots\sum_{x_{n+1} \in \mathcal{X}}
    \sum_{\lambda_2 \in \mathcal{L}}\cdots\sum_{\lambda_{n+1} \in \mathcal{L}}  \prod_{k=2}^{n+1}q(x_{k-1},x_{k} |  \lambda_{k-1})p_{\Lambda | X, \Theta}(\lambda_k | x_k, \theta) =1.
\end{align*}
It is not hard to check the equality holds by taking summation in the following order: $\lambda_{n+1}\to x_{n+1}\to \lambda_{n}\to x_{n}\to\cdots \to\lambda_{2}\to x_{2}$. This implies that 
\begin{align*}
    \bar S_{n+1} &= \sum_{x_{0} \in \mathcal{X}}\sum_{x_{1} \in \mathcal{X}}\sum_{\lambda_{0} \in \mathcal{L}}
    \sum_{\lambda_{1} \in \mathcal{L}}  \left(
    \frac{  \rho(\lambda)-p_{\Lambda | X, \Theta}(\lambda | x_{n+1}, \theta)}{(n+1)\rho(\lambda)}\right)  
   q(x_{0},x_{1} |  \lambda_{0})\\
   &\qquad \times p_{\Lambda | X, \Theta}(\lambda_{1} | x_{1}, \theta)p_{X | \Lambda}(x_0 |  \lambda_0)p_{\Lambda | \Theta}(\lambda_0 | \theta) \\
   & = \frac{1}{n+1}S_1.
\end{align*}
As for $\hat S_{n+1}$, since 
\begin{align*}
    &\sum_{x_0 \in \mathcal{X}}\sum_{\lambda_0 \in \mathcal{L}} q(x_{0},x_{1} |  \lambda_{0}) p_{\Lambda | X, \Theta}(\lambda_{1} | x_{1}, \theta)p_{X | \Lambda}(x_0 |  \lambda_0)p_{\Lambda | \Theta}(\lambda_0 | \theta)\\
    & = \sum_{\lambda_0 \in \mathcal{L}}  p_{\Lambda | X, \Theta}(\lambda_{1} | x_{1}, \theta)p_{X | \Lambda}(x_1 |  \lambda_0)p_{\Lambda | \Theta}(\lambda_0 | \theta)\\
    & = p_{\Lambda | X, \Theta}(\lambda_{1} | x_{1}, \theta)p_{X | \Theta}(x_1 |  \theta)\\
    & =  p_{X | \Lambda}(x_1 |  \lambda_1)p_{\Lambda | \Theta}(\lambda_1 | \theta),
\end{align*}
we have 
\begin{align*}
    \hat S_{n+1} &= \sum_{x_1 \in \mathcal{X}} \cdots\sum_{x_{n+1} \in \mathcal{X}}
    \sum_{\lambda_1 \in \mathcal{L}}\cdots\sum_{\lambda_{n+1} \in \mathcal{L}}  \left(
    \frac{\sum_{k=2}^{n+1}  [\rho(\lambda)-p_{\Lambda | X, \Theta}(\lambda | x_{k}, \theta)]}{(n+1)\rho(\lambda)}\right)  
    \\ & \qquad \times \left(\prod_{k=2}^{n+1}q(x_{k-1},x_{k} |  \lambda_{k-1})p_{\Lambda | X, \Theta}(\lambda_k | x_k, \theta) \right)
    p_{X | \Lambda}(x_1 |  \lambda_1)p_{\Lambda | \Theta}(\lambda_1 | \theta)\\
    & = \frac{n}{n+1}S_n.
\end{align*}
Hence, 
\[
    S_{n+1} = \bar S_{n+1}+\hat S_{n+1} = \frac{1}{n+1}S_1+\frac{n}{n+1}S_n = 1-\frac{p_{\Lambda | \Theta}(\lambda | \theta)}{\rho(\lambda)}.
\]
We complete the proof by mathematical induction.
\end{proof}

The limit ODE can now be identified by
\begin{align}\label{eq:limitode}
    \dot \theta  = \bar g(\theta) + z, 
\end{align}
where $z \in -C(\theta)$ is the reflection due to the projection onto the hyperrectangle $H$. For $\theta$ in the interior of $H$, $C(\theta) = \{0\}$; for $\theta$ on the boundary of $H$, $C(\theta)$ is the infinite convex cone generated by the outer normals at $\theta$ of the faces on which $\theta$ lies, see \cite[Section 4.3]{KY03} for additional details. 

To identify a set containing the limit points of the stochastic approximation algorithm, it is sufficient to construct a Lyapunov function, $V$, for the limit ODE and find the level set of $0$.
If $\left\langle \nabla V(\theta),\bar g(\theta)\right\rangle \leq 0$, with equality if and only if $V(\theta) = 0$, then the Lyapunov function decreases under the dynamics of the limit ODE until it  reaches $V(\theta(t)) = 0$, after which it remains constant at $0$. Indeed,
\[
 V(\theta(t_1)) - V(\theta(t_0)) = \int_{t_0}^{t_1} \langle \nabla V(\theta(t)), \dot \theta(t) \rangle dt = \int_{t_0}^{t_1} \langle \nabla V(\theta(t)), \bar g(\theta(t)) \rangle dt  < 0, 
\]
where the reflection term has been ignored for convenience. Consequently, the limit set of the ODE is contained in the level set where $V(\theta) = 0$. 

 To construct a Lyapunov function for the limit ODE of the AWH algorithm,  let $V:\mathbb{R}^{|\mathcal{L}|} \to [0,\infty)$ be given by
\begin{align}\label{eq:V}
    V(\theta) = \sum_{\sigma \in \mathcal{L}}\rho(\sigma) [\bar g(\theta)(\sigma)]^2 = 
    \sum_{\sigma \in \mathcal{L}}\rho(\sigma) \left(1-\frac{p_{\Lambda | \Theta}(\sigma | \theta)}{\rho(\sigma)}\right)^2. 
\end{align}

\begin{lemma} 
\label{lem_4}
Let $V$ be given by \eqref{eq:V}. Then $V$ is continuously differentiable and $\left\langle \nabla V(\theta),\bar g(\theta)\right\rangle \leq 0$ with equality if and only if $V(\theta) = 0$.
\end{lemma}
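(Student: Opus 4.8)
The plan is to compute $\nabla V$ explicitly and recognize the inner product $\langle \nabla V(\theta),\bar g(\theta)\rangle$ as $-2$ times a variance. Write $p_\sigma = p_{\Lambda | \Theta}(\sigma|\theta)$ and $r(\sigma) = \bar g(\theta)(\sigma) = 1 - p_\sigma/\rho(\sigma)$, so that $V(\theta) = \sum_{\sigma}\rho(\sigma)\,r(\sigma)^2$. Since each $p_\sigma$ is a ratio of exponentials with a strictly positive denominator and $\rho(\sigma)>0$ for all $\sigma$, every $p_\sigma$ is a $C^\infty$ function of $\theta$ and hence so is $V$; continuous differentiability is therefore immediate, and the substance of the lemma lies entirely in the sign of the inner product and its equality case.

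First I would record the softmax Jacobian. Because $p_\sigma$ depends on $\theta$ only through the exponents $\theta(\cdot)-F(\cdot)$, a direct differentiation gives $\partial p_\sigma/\partial\theta(\mu) = p_\sigma(\delta_{\sigma\mu}-p_\mu)$. Substituting this into $\partial V/\partial\theta(\mu) = 2\sum_\sigma \rho(\sigma)\,r(\sigma)\,\partial r(\sigma)/\partial\theta(\mu)$ together with $\partial r(\sigma)/\partial\theta(\mu) = -\rho(\sigma)^{-1}\partial p_\sigma/\partial\theta(\mu)$, the $\rho(\sigma)$ factors cancel and, after collecting the $\delta_{\sigma\mu}$ and $p_\mu$ contributions, I expect
\[
\nabla V(\theta)(\mu) = -2\,p_\mu\bigl(r(\mu)-m\bigr),\qquad m \doteq \sum_\sigma p_\sigma\,r(\sigma).
\]

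The decisive step is then to contract with $\bar g(\theta) = r$. Using $\sum_\mu p_\mu r(\mu) = m$ and $\sum_\mu p_\mu = 1$,
\[
\langle \nabla V(\theta),\bar g(\theta)\rangle = -2\sum_\mu p_\mu\bigl(r(\mu)-m\bigr)r(\mu) = -2\Bigl(\sum_\mu p_\mu r(\mu)^2 - m^2\Bigr),
\]
which is exactly $-2\,\mathrm{Var}_p(r)\le 0$, the variance of $r$ under the probability vector $(p_\mu)_{\mu\in\mathcal{L}}$. Spotting that the contracted expression is a genuine variance is the crux of the argument: it simultaneously yields the non-positivity and controls when equality holds.

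For the equality characterization, $\mathrm{Var}_p(r)=0$ forces $r$ to be $p$-almost surely constant, and since $p_\mu>0$ for every $\mu$ (the Gibbs form keeps every mass strictly positive and $\rho(\mu)>0$) this means $r(\mu)\equiv c$ for a single constant $c$. Then $p_\mu = (1-c)\rho(\mu)$; summing over $\mu$ and using that $p$ and $\rho$ are both probability vectors gives $1 = 1-c$, hence $c=0$ and $p_\mu=\rho(\mu)$ for all $\mu$. This is precisely $r\equiv 0$, equivalently $V(\theta) = \sum_\sigma\rho(\sigma)r(\sigma)^2 = 0$ (again using $\rho(\sigma)>0$). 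Thus $\langle \nabla V(\theta),\bar g(\theta)\rangle\le 0$ with equality iff $V(\theta)=0$. The main obstacle is bookkeeping the softmax Jacobian carefully enough that the variance structure emerges cleanly; the only genuinely non-mechanical point is the normalization argument that forces the constant $c$ to vanish, which is what ties the equality case back to $V(\theta)=0$ rather than merely to $\bar g(\theta)$ being constant.
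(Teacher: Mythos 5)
Your proposal is correct and follows essentially the same route as the paper: compute the softmax Jacobian, obtain $\nabla V$, recognize $\langle \nabla V(\theta),\bar g(\theta)\rangle$ as $-2\,\mathrm{Var}_{p_{\Lambda|\Theta}(\cdot|\theta)}(\bar g(\theta))$, and close the equality case by summing $\bar g(\theta)(\lambda)\equiv c$ against $\rho$ to force $c=0$. Your explicit remark that strict positivity of every $p_\mu$ is what upgrades ``$p$-a.s.\ constant'' to ``constant on all of $\mathcal{L}$'' is a small point the paper leaves implicit, but the argument is otherwise identical.
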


\begin{proof} 
It follows immediately from the definition that $V$ is continuously differentiable. It remains
to prove $\left\langle \nabla V(\theta),\bar g(\theta) \right\rangle \leq 0$ with
equality if and only if $V(\theta) = 0$. To determine the gradient, $\nabla V$, note first that since $p_{\Lambda | \Theta}$ is given by \eqref{eq:plambda}, 
\begin{align*}
    \frac{\partial p_{\Lambda | \Theta}(\sigma | \theta)}{\partial \theta(\lambda)} = \left\{\begin{array}{ll} 
    - p_{\Lambda | \Theta}(\sigma | \theta)p_{\Lambda | \Theta}(\lambda | \theta), & \sigma \neq \lambda, \\
    p_{\Lambda | \Theta}(\lambda | \theta)(1-p_{\Lambda | \Theta}(\lambda | \theta)), & \sigma = \lambda.\end{array}\right.
\end{align*}
Consequently, the gradient of $V$ is given by
\begin{align*}
\frac{\partial V(\theta)}{\partial \theta(\lambda)} & =\frac{\partial}{\partial \theta(\lambda)}\left(\sum_{\sigma \in \mathcal{L}}\rho(\sigma)\left(1-\frac{p_{\Lambda | \Theta}(\sigma | \theta)}{\rho(\sigma)}\right)^{2}\right)\\
 & =
 \sum_{\sigma \in \mathcal{L}}2
 \left(1-\frac{p_{\Lambda | \Theta}(\sigma | \theta)}{\rho(\sigma)}\right) \left(-\frac{\partial p_{\Lambda | \Theta}(\sigma | \theta)}{\partial \theta(\lambda)}\right)\\
 & =2 \sum_{\sigma \in \mathcal{L}} \left(1-\frac{p_{\Lambda | \Theta}(\sigma | \theta)}{\rho(\sigma)}\right) p_{\Lambda | \Theta}(\sigma | \theta)p_{\Lambda | \Theta}(\lambda | \theta)
 \\ & \quad - 2\left(1-\frac{p_{\Lambda | \Theta}(\lambda | \theta)}{\rho(\lambda)}\right)p_{\Lambda | \Theta}(\lambda | \theta) \\
  & =2 \sum_{\sigma \in \mathcal{L}} \bar g(\theta)(\sigma) p_{\Lambda | \Theta}(\sigma | \theta)p_{\Lambda | \Theta}(\lambda | \theta)- 2\bar g(\theta)(\lambda)p_{\Lambda | \Theta}(\lambda | \theta).
\end{align*}
Taking the inner product with $\bar g(\theta)$ yields,
\begin{align*}
 \frac{1}{2}\left\langle \nabla V(\theta),\bar g(\theta)\right\rangle
 & =\sum_{\lambda \in \mathcal{L}}\left[\sum_{\sigma \in \mathcal{L}} \bar g(\theta)(\sigma) p_{\Lambda | \Theta}(\sigma | \theta)
 p_{\Lambda | \Theta}(\lambda | \theta)\right] \bar g(\theta)(\lambda)\\
 & \qquad-\sum_{\lambda \in \mathcal{L}} \bar g(\theta)(\lambda)p_{\Lambda | \Theta}(\lambda | \theta) \bar g(\theta)(\lambda)\\
  & =\left(\sum_{\lambda \in \mathcal{L}} \bar g(\theta)(\lambda)p_{\Lambda | \Theta}(\lambda | \theta)\right)^{2}
 - \sum_{\lambda \in \mathcal{L}}  [\bar g(\theta)(\lambda)]^2 p_{\Lambda | \Theta}(\lambda | \theta) \\
 &= - \text{Var}_{p_{\Lambda | \Theta}(\cdot | \theta)}(\bar g(\theta)) \\
 & \leq 0.
\end{align*}
In the last equality it is observed that the desired expression can be represented as the variance of $\bar g(\theta)$ under the distribution $p_{\Lambda | \Theta}(\cdot | \theta)$ on $\mathcal{L}$. Consequently, $\left\langle \nabla V(\theta),\bar g(\theta)\right\rangle =0$
if and only if $\bar g(\theta)$ is a constant, i.e., there exists a constant $c\in\mathbb{R}$ such that
\[
 \bar g(\theta)(\lambda)=1-\frac{p_{ \Lambda|\Theta}(\lambda | \theta)}{\rho(\lambda)} =c, \quad \text{ for all }\lambda \in\mathcal{L}.
\]
Multiplying both sides by $\rho(\lambda)$ and summing over $\lambda$, leads to
\begin{align*}
c & =\sum_{\lambda \in \mathcal{L}} c \rho(\lambda)=\sum_{\lambda \in \mathcal{L}} \left(\rho(\lambda)-p_{\Lambda | \Theta}(\lambda | \theta)\right)
=0.
\end{align*}
This implies that $c$ must be equal to $0$, and thus $\left\langle \nabla V(\theta),\bar g(\theta)\right\rangle =0$
if and only if
\[
V(\theta)=\sum_{\lambda \in \mathcal{L}} \rho(\lambda) [\bar g(\theta)(\lambda)]^{2} = c^2 = 0. 
\]
This completes the proof.
\end{proof}

\subsection{Convergence of estimates of free energy differences}

In this subsection, the main theorem on the almost sure convergence of the estimates of the design parameters $\{\theta_n\}$ of the AWH algorithm is provided in Theorem \ref{thm:asconvergence}. The subsequent Corollary \ref{cor:fediff} provides the result on the almost sure convergence of estimates of free energy differences. 

\begin{theorem}\label{thm:asconvergence}
Let $\{\theta_n\}$ be given by the AWH algorithm \eqref{eq:recursion}, with target distribution $\rho$, $\rho(\lambda) > 0, \lambda \in \mathcal{L}$, let $H$ a hyper-rectangle in $\mathbb{R}^{|\mathcal{L}|}$ and $V$ be given by \eqref{eq:V}. Assume that $H$ is large enough so that $\{\theta\in H:V(\theta)=0\}$ is non-empty. Then  $\{\theta_n\}$ converges almost surely to $\{\theta \in H: V(\theta) = 0\}$. 
\end{theorem}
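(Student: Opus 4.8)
The plan is to invoke the general almost-sure convergence theorem for projected stochastic approximation with Markov state-dependent noise (the result from \cite[Chapter 6]{KY03} recorded in the Appendix), applied to the recursion \eqref{eq:recursion}, and then to use the Lyapunov function $V$ to pin down the limit set. Since the two genuinely hard steps, namely the identification of $\bar g$ as the stationary average of $h$ (Lemma \ref{lem:gbar}) and the construction of the Lyapunov function (Lemma \ref{lem_4}), are already in place, what remains is to verify the hypotheses of the cited theorem and to assemble the pieces.

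First I would dispose of the routine conditions. The step sizes $\varepsilon_n=1/(n+1)$ satisfy $\varepsilon_n\ge 0$, $\sum_n\varepsilon_n=\infty$ and $\sum_n\varepsilon_n^2<\infty$. Because $\rho(\lambda)>0$ and $0\le p_{\Lambda|X,\Theta}\le 1$, the map $h$ is uniformly bounded (indeed $h(\theta,\xi)(\lambda)\in[1-1/\rho(\lambda),1]$), hence so are $g$, $Y_n$ and the martingale differences $\delta M_n=Y_n-E_nY_n$. Boundedness of $\delta M_n$ together with $\sum_n\varepsilon_n^2<\infty$ makes $\sum_n\varepsilon_n\delta M_n$ convergent almost surely by the martingale convergence theorem, so the noise averages out. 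A Taylor expansion of $\log(1+\cdot)$ with a bounded argument shows $\beta_n=O(\varepsilon_n)$, so $\beta_n\to 0$ almost surely and $E_n\beta_n=o(1)$; the bias is asymptotically negligible. Finally, $H$ is a compact hyper-rectangle, so the iterates stay bounded and stability is automatic.

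The substantive verification is the state-dependent-noise condition: for each fixed $\theta$ the transition kernel $p(\xi,\cdot\,|\theta)$ of \eqref{eq:transitionprob} must define an ergodic chain with a unique invariant measure depending continuously on $\theta$, and $\bar g(\theta)$ must equal the corresponding average of $h(\theta,\cdot)$. The kernel is the $N_I$-block of the Gibbs sampler alternating the $q$-update of $X$ and the $p_{\Lambda|X,\Theta}$-update of $\Lambda$. By assumption \eqref{eq:irr} the $X$-update is irreducible and aperiodic for each $\lambda$, while the $\Lambda$-update is strictly positive on all of $\mathcal{L}$; together these make the chain on $\mathcal{X}\times\mathcal{L}$ irreducible and aperiodic with unique invariant distribution $p_{X,\Lambda|\Theta}(\cdot|\theta)$, and the same holds for the block chain $\xi_n$. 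Since $\mathcal{X},\mathcal{L}$ are finite and the kernel entries are smooth in $\theta$, the ergodicity is uniform and the invariant measure is continuous in $\theta$ on the compact set $H$, which supplies exactly the continuity and mixing hypotheses of the cited theorem. Lemma \ref{lem:gbar} then identifies the stationary average of $h(\theta,\cdot)$ as $\bar g(\theta)$, closing this step.

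With all hypotheses verified, the cited theorem yields that $\{\theta_n\}$ converges almost surely to a compact invariant limit set of the projected ODE \eqref{eq:limitode} contained in $H$, and it remains to show this set lies in $\{\theta\in H:V(\theta)=0\}$, which is non-empty by assumption. In the interior of $H$ the reflection vanishes and Lemma \ref{lem_4} gives $\langle\nabla V(\theta),\bar g(\theta)\rangle<0$ off $\{V=0\}$, so by a LaSalle-type invariance argument no interior point with $V>0$ can belong to the limit set. The main obstacle is the behaviour on $\partial H$: the reflection term $z\in-C(\theta)$ need not satisfy $\langle\nabla V(\theta),z\rangle\le 0$ for a box, so $V$ is not automatically non-increasing along boundary trajectories, and one must rule out spurious equilibria of the projected ODE on $\partial H$ at which $\bar g(\theta)\in C(\theta)$ but $V(\theta)>0$. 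This is the delicate part; I expect to resolve it by analysing the projected-ODE stationary set directly, using the structure $\bar g(\theta)(\lambda)=1-p_{\Lambda|\Theta}(\lambda|\theta)/\rho(\lambda)$ together with the identity $\sum_\lambda\rho(\lambda)\bar g(\theta)(\lambda)=0$ and the tension between a coordinate being pinned at a box face and the sign its $\bar g$-component is forced to take there, showing that for $H$ taken large enough the only such stationary points satisfy $\bar g(\theta)=0$, i.e.\ $V(\theta)=0$.
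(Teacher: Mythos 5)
Your overall route is the same as the paper's: write the AWH recursion in the projected stochastic--approximation form \eqref{eq:recursion}, verify the hypotheses of the Kushner--Yin theorem for Markovian state-dependent noise (Theorem \ref{thm_1}), and use Lemma \ref{lem:gbar} to identify $\bar g$ and Lemma \ref{lem_4} to locate the limit set. Your verification of the conditions matches the paper's in substance: boundedness of $h$ from $\rho>0$ gives the moment condition, and together with $\sum_n\varepsilon_n^2<\infty$ it controls the martingale noise (this is exactly the paper's route through (A2.1) and (A2.4) to (A1.4)); continuity of $g$ in $\theta$ is immediate; and irreducibility and aperiodicity from \eqref{eq:irr} together with finiteness of $\mathcal{X}\times\mathcal{L}$ give geometric ergodicity of the block chain $\{\xi_n\}$, which is how the paper verifies (A6.1)--(A6.2) via (A3.1)--(A3.3).

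The one place your write-up does not close is the final step. You correctly observe that for the projected ODE the reflection term could in principle create stationary points on $\partial H$ with $\bar g(\theta)\in C(\theta)$ but $V(\theta)>0$, and you propose to exclude them by a sign analysis using $\sum_\lambda\rho(\lambda)\bar g(\theta)(\lambda)=0$ and the constraint that a coordinate pinned at an upper (resp.\ lower) face must have $\bar g(\theta)(\lambda)\geq 0$, i.e.\ $p_{\Lambda|\Theta}(\lambda|\theta)\leq\rho(\lambda)$ (resp.\ $\geq$) --- but you leave this as ``I expect to resolve it by\dots'', so as written the argument is incomplete. The paper takes a different (and shorter) exit here: it concludes directly from Lemma \ref{lem_4} by invoking the projected-Lyapunov result \cite[Theorem 4.2.3]{KY03}, and in its heuristic discussion explicitly ignores the reflection term. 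Your instinct that the boundary deserves an argument --- and that ``$H$ large enough'' should do more work than merely making $\{V=0\}\cap H$ non-empty --- is sound and, if carried out, would strengthen the exposition; but to have a complete proof you must either finish that sign analysis or fall back on the cited Lyapunov theorem for projected algorithms as the paper does.
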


\begin{proof}
As noted previous, the recursion of the AWH can be written as a stochastic approximation algorithm with state-dependent noise as in \eqref{eq:recursion}.  It follows from Theorem \ref{thm_1} in the Appendix, once the conditions of that theorem are verified, that $\{\theta_n\}$ converges almost surely to some limit set of the limit ODE, given by \eqref{eq:limitode}. By Lemma \ref{lem_4} it follows that the limit set is contained in $\{\theta \in H: V(\theta) = 0\}$, see e.g.\ \cite[Theorem 4.2.3]{KY03}. 

To complete the proof, the following conditions of Theorem \ref{thm_1} must be verified: (5.1.1), (A1.1), (A1.2), (A1.4), (6.2), (A6.1), (A6.2), and (A4.3.1).  

Condition (5.1.1) is satisfied since    $\varepsilon_n = 1/(n+1)$, for $n \geq 0$, and consequently, $\sum_{n=0}\varepsilon_n  = \infty$, $\varepsilon_n \geq 0$, and $ \lim_{n \to \infty} \varepsilon_n = 0$. To verify (A1.1), the upper bound
    \begin{align}
        \label{eq:hbound}
        \left|h(\theta,\xi)(\lambda)\right| = \left|1-\frac{\sum_{k=1}^{N_I} p_{\Lambda | X, \Theta}(\lambda | \xi_{k}, \theta)}{N_I\rho(\lambda)}\right|\leq 1+\frac{1}{\rho(\lambda)}, 
 \end{align}
    implies that  $|h(\theta,\xi)(\lambda)|\leq C$ with $C\doteq 1 + \max_{\lambda\in\mathcal{L}}(1/\rho(\lambda))<\infty$ for all $\theta,\xi$ and $\lambda$. Consequently,  
    $\sup_n E|Y_n|=\sup_n E|h(\theta_n,\xi_{n+1})|\leq C <\infty.$
The condition (A1.2) states that $g_n(\theta, \xi) = g(\theta,\xi)$ is continuous in $\theta$
for each $\xi$. It follows immediately from the definitions, since both $p_{\Lambda | X, \Theta}(\lambda | \xi(k), \theta)$ and $p(\xi,\eta | \theta)$ are continuous in $\theta$ for each $\xi,\eta$, $\lambda$ and $k\in\{1,2,\dots,N_I\}$. To verify condition (A1.4) it is sufficient to prove
\begin{align*}
    \text{(A2.1)} \quad \sup_n E|Y_n|^2<\infty \quad \text{and} \qquad 
    \text{(A2.4)} \quad \sum_n \varepsilon_n^2 <\infty.
\end{align*}
Indeed, according to the last paragraph on p.\ 137 in \cite{KY03}, (A2.1) and (A2.4) implies the stronger statement (3.2) in \cite{KY03}, which says for some $T>0$,%\footnote{(A2.1) and (A2.4) are listed on p.126 in \cite{KY03}.}
    \[
        \lim_{n\rightarrow\infty}\left(\sup_{j\geq n}\max_{0\leq t\leq T} \left|\sum_{i=m(jT)}^{m(jT+t)-1}\varepsilon_i\delta M_i\right|\right) = 0, \quad  \text{ a.s}.
    \]
    Thus, for each $\mu>0$ and some $T>0$,
    \[
        \lim_{n\rightarrow\infty}P\left\{ \sup_{j\geq n}\max_{0\leq t\leq T}\left|\sum_{i=m(jT)}^{m(jT+t)-1}\varepsilon_i\delta M_i\right|\geq \mu\right\} = 0, 
    \]
    which is condition (A1.4). The conditions (A2.1) and (A2.4) are satisfied by the definition of $\varepsilon_n$ and since the function $h$ is bounded by \eqref{eq:hbound}.
 Condition (6.2) states that  $P(\xi_{n+1}\in\cdot|\xi_i,\theta_i,i\leq n) = p(\xi_n,\cdot|\theta_n)$, where $p(\xi,\cdot|\theta)$ denotes the one-step transition probability with starting point $\xi$ and parameterized by $\theta$, which has already been observed in connection to \eqref{eq:transitionprob}. 
 
 To verify conditions (A6.1) and (A6.2), recall that sufficient conditions for them (provided in \cite[Section 5.3]{KY03}) are \\
(A3.1) For each $\mu>0$,
\[
    \sum_n e^{-\mu/\varepsilon_n}<\infty.
\]
(A3.2) For some $T<\infty$, there is a $c_1(T)<\infty$ such that for all $n$,
\[
    \sup_{n\leq i\leq m(t_n+T)}\frac{\varepsilon_i}{\varepsilon_n}\leq c_1(T).
\]
(A3.3) There is a real $K<\infty$ such that for small real $\gamma$, all $n$, and each component $\alpha_{n,j}$ of $\alpha_n$, $\delta N_{n,j}$ of $\delta N_{n}$,
\[
    E_n e^{\gamma \alpha_{n,j}}\leq e^{\gamma^2K/2}\quad\mbox{ and }\quad E_n e^{\gamma (\delta N_{n,j})}\leq e^{\gamma^2K/2}.
\]
One can easily verify that conditions (A3.1) and (A3.2) hold when $\varepsilon_n =1/(n+1)$. It remains to check if (A3.3) holds for both  $\alpha_n$ and $\delta N_n$. To prove this, let $\bar{g}$ be given by \eqref{eq:gbar} and note that, by Lemma \ref{lem:gbar} 
    \[
        \bar g(\theta) = E[g(\theta,\xi_1) | \theta_0=\theta], 
    \]
    where the expectation is taken under the invariant distribution of $\xi_1$ given $\theta_0 = \theta$. Since $p_{\Lambda | \Theta}(\lambda | \theta)$ is continuous in $\theta$ for each $\lambda \in \mathcal{L}$ it follows that $\bar{g}$ is continuous.  Moreover, since the state space $\mathcal{X}^{N_I} \times \mathcal{L}^{N_I}$ of $\{\xi_n\}$ is finite, and $\{\xi_n\}$ is irreducible and aperiodic,  
    %under some mild condition\footnote{such as Doeblin condition.}, 
    $\{\xi_n\}$ is a geometrically ergodic Markov chain. Indeed, the irreducibility and aperiodicity of $\{\xi_n\}$ follows from the definition of its transition probability $p(\xi, \eta | \theta)$ in \eqref{eq:transitionprob}, the assumption of irreducibility and aperiodicity on $\{q(x,y|\lambda)\}_{x,y}$ and the definition of $p_{\Lambda | X, \Theta}(\lambda | x, \theta) $. Consequently, the summands in the definition of $v_n(\theta,\xi)$ converges to zero at a geometric rate. This implies that $\alpha_n$ and $\delta N_n$ are bounded. As a result, they both satisfy (A3.3) and the asymptotic rates of change of the processes $A^0(t)$ and $N^0(t)$ are zero with probability one. %{\color{red} Write out the arguments more explicitly. It should be okay now.}
    Lastly, (A4.3.1) holds due to our assumption on $H$. This completes the proof. 
\end{proof}

%\begin{remark}
%It is implicitly assumed the constraint set $H$ is large enough so that $\{\theta\in H:V(\theta)=0\}$ is non-empty. If not, then $\{\theta_n\}$ converges almost surely to $\partial H$. In a numerical implementation, if one sees the convergence of $\{\theta_n\}$ to $\partial H$, then one should enlarge the constraint set $H$. 
%\end{remark}

\begin{corollary} \label{cor:fediff}
Let $\{\theta_n\}$ be given by the AWH algorithm \eqref{eq:recursion}, with target distribution $\rho$, $\rho(\lambda) > 0, \lambda \in \mathcal{L}$ and $H$ a hyper-rectangle in $\mathbb{R}^{|\mathcal{L}|}$ such that $\{\theta \in H: V(\theta) = 0\}$ is non-empty, where $V$ is given by \eqref{eq:V}. Then, for each $\lambda,\sigma \in \mathcal{L}$, the sequence $\{\theta_n(\lambda) - \theta_n(\sigma)\}$,  converges almost surely to $F(\lambda)-F(\sigma) + \log \rho(\lambda)-\log \rho(\sigma)$.
\end{corollary}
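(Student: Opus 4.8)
The plan is to deduce the corollary from Theorem \ref{thm:asconvergence} by combining the explicit characterization of the level set $\{\theta \in H : V(\theta) = 0\}$ with the continuity of the map $\theta \mapsto \theta(\lambda) - \theta(\sigma)$.

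First I would identify the level set. From the computation in the proof of Lemma \ref{lem_4}, $V(\theta) = 0$ holds if and only if $\bar g(\theta)(\lambda) = 0$ for every $\lambda \in \mathcal{L}$, i.e.\ if and only if $p_{\Lambda | \Theta}(\lambda | \theta) = \rho(\lambda)$ for all $\lambda \in \mathcal{L}$. Using the explicit form \eqref{eq:plambda} of $p_{\Lambda | \Theta}$, this condition reads $e^{\theta(\lambda) - F(\lambda)} / \sum_{\sigma} e^{\theta(\sigma) - F(\sigma)} = \rho(\lambda)$. Forming the ratio of this identity at two points $\lambda$ and $\sigma$ and taking logarithms (exactly as in \eqref{eqn:free_energy_diff}) yields $\theta(\lambda) - \theta(\sigma) = F(\lambda) - F(\sigma) + \log \rho(\lambda) - \log \rho(\sigma)$ for every $\theta$ in the level set. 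Thus, although the level set is not a single point (adding a common constant to all components of $\theta$ leaves $p_{\Lambda | \Theta}$, and hence $V$, unchanged), the differences $\theta(\lambda) - \theta(\sigma)$ take one common value $c^*_{\lambda\sigma} \doteq F(\lambda) - F(\sigma) + \log \rho(\lambda) - \log \rho(\sigma)$ across the entire level set.

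Next I would pass from the set convergence of $\{\theta_n\}$ to convergence of the differences. Since $H$ is a hyper-rectangle, hence compact, and $V$ is continuous by Lemma \ref{lem_4}, the level set $L_0 \doteq \{\theta \in H : V(\theta) = 0\}$ is compact and, by assumption, nonempty. By Theorem \ref{thm:asconvergence}, $\operatorname{dist}(\theta_n, L_0) \to 0$ almost surely. The map $D_{\lambda\sigma}(\theta) = \theta(\lambda) - \theta(\sigma)$ is Lipschitz with some constant $L$, and by the previous step it equals $c^*_{\lambda\sigma}$ identically on $L_0$. Choosing for each $n$ a nearest point $\theta^*_n \in L_0$ to $\theta_n$ (which exists by compactness), one gets $|D_{\lambda\sigma}(\theta_n) - c^*_{\lambda\sigma}| = |D_{\lambda\sigma}(\theta_n) - D_{\lambda\sigma}(\theta^*_n)| \le L \operatorname{dist}(\theta_n, L_0) \to 0$ almost surely, which is the claimed convergence.

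The argument is essentially mechanical once the level set has been identified, so I do not expect a serious obstacle. The only point requiring care is that $L_0$ generally contains a one-parameter family of points differing by a common shift, so one cannot assert that $\theta_n$ itself converges — only that the shift-invariant differences do. This is precisely why the corollary is stated in terms of $\theta_n(\lambda) - \theta_n(\sigma)$, reflecting the fact that free energy differences, not absolute free energies, are identifiable from the algorithm.
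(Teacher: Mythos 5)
Your proposal is correct and follows essentially the same route as the paper: identify the level set $\{V=0\}$ as the set where $p_{\Lambda\mid\Theta}(\cdot\mid\theta)=\rho$, observe that the difference $\theta(\lambda)-\theta(\sigma)$ takes the single value $F(\lambda)-F(\sigma)+\log\rho(\lambda)-\log\rho(\sigma)$ on that set, and invoke Theorem \ref{thm:asconvergence}. The only difference is that you spell out the final step (set convergence plus constancy of the Lipschitz functional $D_{\lambda\sigma}$ on the compact level set implies convergence of the differences), which the paper leaves implicit; this is a welcome bit of extra rigor but not a different argument.
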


 \begin{proof}
 By Theorem \ref{thm:asconvergence} the sequence $\{\theta_n\}$ converges almost surely to the set $\{\theta \in H : V(\theta) = 0\}$. For any $\theta \in \mathbb{R}^{|\mathcal{L}|}$ with $V(\theta) = 0$ it follows that $p_{\Lambda | \Theta}(\lambda | \theta) = \rho(\lambda)$, for each $\lambda \in \mathcal{L}$, and consequently,
 \begin{align*}
     \theta(\lambda) - F(\lambda) = \log \rho(\lambda) + \log \sum_{\tau \in \mathcal{L}} e^{\theta(\tau)-F(\tau)}. 
 \end{align*}
 Hence, for $\lambda, \sigma \in \mathcal{L}$, 
  \begin{align*}
     \theta(\lambda) - \theta(\sigma) =  F(\lambda)-F(\sigma) + \log \rho(\lambda) - \log \rho(\sigma). 
 \end{align*}
 This completes the proof. 
 \end{proof}

From Corollary \ref{cor:fediff} it follows that estimates of free energy differences can be obtained by running the AWH algorithm for $N$ iterations, to obtain $\theta_N$, and then estimate the free energy difference $F(\lambda) - F(\sigma)$ by
\begin{align*}
    \theta_N({\lambda)}-\theta_N(\sigma)- \log \rho(\lambda) + \log \rho(\sigma). 
\end{align*}

\section{Almost Sure Convergence of Ergodic Averages} \label{sec:erg}

In addition to estimating free energy differences, the AWH algorithm may be used to estimate expectations at a fixed parameter value. Namely, for a fixed $\lambda \in \mathcal{L}$ and a function $\psi:\mathcal{X}\rightarrow\mathbb{R}$, consider estimating 
\begin{align}\label{eq:ergavg}
    \sum_{x\in\mathcal{X}}\psi(x)p_{X | \Lambda}(x | \lambda),
\end{align}
which assumed to be finite.

At the $n$th iteration of the AWH algorithm, given the free energy estimate $\theta_{n}$, the Markov chain $\{(X_{nN_I + k},\Lambda_{nN_I + k}), k=1, \dots, N_I\}$ has invariant distribution $p_{X,\Lambda | \Theta}(x,\lambda | \theta_n)$ given by \eqref{eq:pjoint}. The estimate of \eqref{eq:ergavg} that we proposed in Subsection \ref{subsec:2.2} is given by
\begin{align} \label{eq:ergest}
    \frac{\sum_{n=0}^{N-1} \sum_{k=1}^{N_I}\psi(X_{nN_I + k})p_{\Lambda | X, \Theta}(\lambda | X_{nN_I + k}, \theta_n)}{\sum_{n=0}^{N-1}\sum_{k=1}^{N_I}p_{X | \Lambda, \Theta}(\lambda | X_{nN_I + k}, \theta_n)}.  
\end{align}
To study the almost sure convergence of the estimator in the last display, it is sufficient to study the almost sure convergence of
\begin{align*}
    \hat \phi_{N} = \frac{1}{N\,N_I}\sum_{n=0}^{N-1} \sum_{k=1}^{N_I}\phi(X_{nN_I + k})p_{\Lambda | X, \Theta}(\lambda | X_{nN_I + k}, \theta_n),
\end{align*}
which is an estimate of 
\begin{align*}
   \sum_{x \in \mathcal{X}} \phi(x) p_{ \Lambda | X, \Theta}(\lambda | x, \theta_*)p_{X | \Theta}(x |\theta_* ) =  \sum_{x \in \mathcal{X}} \phi(x) p_{X | \Lambda}(x | \lambda)p_{\Lambda | \Theta}(\lambda | \theta_*),
\end{align*}
where the optimal design parameter $\theta_*$ satisfying
\begin{align*}
  p_{\Lambda | \Theta}(\lambda | \theta_*) = \rho(\lambda), \quad \lambda \in \mathcal{L}. 
\end{align*}
%convenient to consider a finite number of functions, $\psi^{(j)}:\mathcal{X}\rightarrow\mathbb{R}$ and associated parameter values $\lambda^{(j)} \in \mathcal{L}$,  $j \in \mathcal{J} =\{1, \dots, J\}$, and the weighted averages,
%\begin{align*}
%    \hat \psi^{(j)}_{N} = \frac{1}{N\,N_I}\sum_{n=0}^{N-1} \sum_{k=1}^{N_I}\psi^{(j)}(X_{nN_I + k})p_{\Lambda | X, \Theta}(\lambda^{(j)} | X_{nN_I + k}, \theta_n),
%\end{align*}
%which are estimates of 
%\begin{align*}
%   &\sum_{x \in \mathcal{X}} \psi^{(j)}(x) p_{ \Lambda | X, \Theta}(\lambda^{(j)} | x, F + \log \rho)p_{X | \Theta}(x | F + \log \rho) \\ &=  \sum_{x \in \mathcal{X}} \psi^{(j)}(x) p_{X | \Lambda, \Theta}(x | \lambda^{(j)}, F + \log \rho)p_{\Lambda | \Theta}(\lambda^{(j)} | F+ \log \rho). 
%\end{align*}
With $\phi = \psi$ and $\phi \equiv 1$ it follows that the estimator \eqref{eq:ergest} is given by $\hat \psi_{N} / \hat 1_{N}$, which estimates the desired quantity
\begin{align*}
    \frac{\sum_{x \in \mathcal{X}} \psi(x) p_{X | \Lambda}(x | \lambda)p_{\Lambda | \Theta}(\lambda | \theta_*)}{\sum_{x \in \mathcal{X}}  p_{X | \Lambda}(x | \lambda) p_{\Lambda | \Theta}( \lambda | \theta_*)} 
    = \sum_{x \in \mathcal{X}} \psi(x) p_{X | \Lambda}(x | \lambda).
\end{align*}
%It will be sufficient to study the almost sure convergence of the vector $\hat{\psi}_N = (\hat \psi^{(1)}_{N}, \dots, \hat \psi^{(J)}_{N})$. 
In order to show the almost sure convergence of the sequence $\{\hat \phi_{n}\}_{n \geq 1}$, observe that $\{\hat \phi_{n}\}_{n \geq 1}$ can be written as an iteration, with $\hat \phi_{0} = 0$ and for $n \geq 0$, 
\begin{align*}
    \hat \phi_{n+1} = \hat \phi_{n} + \frac{1}{n+1}\left(\Phi(\xi_{n}, \theta_n) -  \hat \phi_{n}\right),
\end{align*} 
where  
\[
\Phi(\xi, \theta) = \frac{1}{N_I} \sum_{k=1}^{N_I} \phi\left(\xi(k)\right) p_{\Lambda | X, \Theta}\left(\lambda | \xi(k), \theta\right),  \; \theta \in \mathbb{R}^{|\mathcal{L}|}
\] and, as in the previous section, 
\[
    \xi_n = (X_{(n-1)N_I + 1},\dots, X_{n N_I},\Lambda_{(n-1)N_I + 1}, \dots, \Lambda_{n N_I}).
\]
Consider the extended design parameter $\vartheta = (\theta, \zeta) \in \mathbb{R}^{|\mathcal{L}|} \times [-C,C]$ with $C\doteq \max_{x\in\mathcal{X}}|\phi(x)|<\infty$, and without loss of generality, we assume $0\not\in\mathcal{L}$ and define $\mathcal{L}_0=\mathcal{L}\cup\{0\}$. For $u \in \mathcal{L}$ we interpret $\vartheta(u) = \theta(u)$, and for $u =0$,  $\vartheta(u) = \zeta$. The optimal value of extended design parameter is $\vartheta_* = (\theta_*, \zeta_*)$, where 
$\zeta_* = \sum_{x \in \mathcal{X}}\phi(x) p_{X | \Lambda}(x | \lambda)p_{\Lambda | \Theta}(\lambda | \theta_*)\in [-C,C]$. The estimator of the optimal extended design parameter is
\[
\vartheta_n = (\theta_n, \zeta_n)=((\theta_n(\sigma):\sigma \in\mathcal{L}), \hat  \phi_{n}),
\]
$\varepsilon_n=1/(n+1)$ and the extended function $h: (\mathbb{R}^{|\mathcal{L}| }\times [-C,C]) \times (\mathcal{X}^{N_I}\times \mathcal{L}^{N_I}) \to \mathbb{R}^{|\mathcal{L}_0| }$ given by
\begin{align*}
    h(\vartheta,\xi)(u) = \left\{\begin{array}{ll} 
    1-\frac{\sum_{k=1}^{N_I} p_{\Lambda | X, \Theta}(u | \xi({k}), \theta)}{N_I\rho(u)}, & \text{ if } u \in \mathcal{L}, \\
    \Phi(\xi, \theta) - \zeta, & \text{ if }  u =0. \end{array} \right. 
\end{align*}
The recursion of the AWH algorithm for the extended design parameter  can be stated as
\begin{align*}
   \vartheta_{n+1} &= \vartheta_{n} + \varepsilon_n\left( h(\vartheta_n, \xi_{n+1}) + \beta_n + Z_n\right)\\
   &= \vartheta_{n} + \varepsilon_n Y_n + \varepsilon_n Z_n,
\end{align*}
where $Y_n = h(\vartheta_n, \xi_{n+1}) + \beta_n$. Let $\mathcal{F}_n=\sigma(\theta_0,Y_0,\dots,Y_{n-1},\xi_1,\dots,\xi_n)$,  $p(\xi,\cdot | \theta)$ be as in \eqref{eq:transitionprob}, and  
\[
    g(\vartheta,\xi)\doteq \sum_{\eta} h(\vartheta,\eta)p(\xi,\eta|\theta).
\]
Since $h$ is bounded, it follows that $E|Y_n|<\infty$ and
\[
    E_n Y_n = g(\vartheta_n,\xi_n) + \beta_n.
\]
By introducing 
\begin{align*}
    \delta M_n 
    &= Y_n -E_n Y_n = h(\vartheta_n,\xi_{n+1}) - g(\vartheta_n,\xi_n)\\
    &= h(\vartheta_n,\xi_{n+1}) - \sum_{\eta} h(\vartheta_n,\eta)p(\xi_n,\eta|\theta)\\
    &= \sum_{\eta} [h(\vartheta_n,\xi_{n+1})-h(\vartheta_n,\eta)]p(\xi_n,\eta|\theta)
\end{align*}
the recursion may be restated as 
\[
    \vartheta_{n+1} = \vartheta_n + \varepsilon_n [E_n Y_n+\delta M_n+Z_n]
\] or equivalently,
\begin{align}
\label{eq:recursion_ext}
    \vartheta_{n+1} = \vartheta_n + \varepsilon_n [g(\vartheta_n,\xi_n)+\delta M_n+\beta_n+Z_n],
\end{align}
which, again, is a standard form of a stochastic approximation algorithm with state-dependent noise, see \cite[Section 6.6]{KY03}. 

\subsection{Identification of the limit ODE}
To derive the limit ODE we proceed as in the previous section with the extended design parameter $\vartheta = (\theta, \zeta)$. Let $\bar g: \mathbb{R}^{|\mathcal{L}| }\times [-C,C] \to 
\mathbb{R}^{|\mathcal{L}_0|}$ be given by
\begin{align} \label{eq:gbar2}
\bar g(\vartheta)(u) &= \left\{\begin{array}{ll} 
1-\frac{p_{\Lambda | \Theta}(u | \theta)}{\rho(u)}, & \text{if } u \in \mathcal{L}, \\
\sum_{\mathcal{X}} \phi(x) p_{X, \Lambda | \Theta}(x, \lambda | \theta) - \zeta, & \text{if } u =0. 
\end{array} \right.
\end{align}
Similar to Section \ref{sec:convergence} the expression for $\bar g$ is the ergodic average of $h(\vartheta,\xi_1)$, for fixed $\vartheta_0 = (\theta,\zeta)$. That is, the expectation over $\xi_1 = (X_{1}, \dots, X_{N_I}, \Lambda_1, \dots, \Lambda_{N_I})$ where $X_{0}$ is sampled from $p_{X | \Theta}(\cdot |  \theta)$. 

\begin{lemma} \label{lem:gbar2}
Let $\bar g$ be given by \eqref{eq:gbar2}. For $\vartheta \in \mathbb{R}^{|\mathcal{L}| }\times [-C,C]$ and $u \in  \mathcal{L}_0$, 
$\bar g(\vartheta)(u) =  E[h(\vartheta,\xi_1)(u) |  \vartheta_0 = (\theta,\zeta)]$. %where the expectation is taken under the invariant distribution of $\Xi$ given $\Theta = \theta$. 
\end{lemma}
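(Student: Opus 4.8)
The plan is to mirror the structure of the proof of Lemma \ref{lem:gbar}, exploiting the fact that the extended function $h(\vartheta, \xi)$ splits into two independent blocks. For the components $u \in \mathcal{L}$, the function $h(\vartheta,\xi)(u)$ is \emph{identical} to the $h(\theta,\xi)(u)$ of Section \ref{sec:convergence}, and it depends on $\vartheta$ only through $\theta$; since the transition probability $p(\xi, \cdot \,| \theta)$ and the initial distribution $p_{X|\Theta}(\cdot | \theta)$ also depend only on $\theta$, the conditional expectation $E[h(\vartheta,\xi_1)(u) | \vartheta_0 = (\theta,\zeta)]$ coincides exactly with $E[h(\theta,\xi_1)(u) | \theta_0 = \theta]$. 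Therefore Lemma \ref{lem:gbar} immediately yields $\bar g(\vartheta)(u) = 1 - p_{\Lambda|\Theta}(u|\theta)/\rho(u)$ for $u \in \mathcal{L}$, with no additional work.

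The substance of the proof is the single new component $u = 0$, where $h(\vartheta,\xi)(0) = \Phi(\xi,\theta) - \zeta$. First I would use linearity and the fact that $\zeta$ is deterministic (fixed at $\vartheta_0$) to reduce the claim to showing
\begin{align*}
    E[\Phi(\xi_1, \theta) \mid \theta_0 = \theta] = \sum_{x \in \mathcal{X}} \phi(x)\, p_{X, \Lambda | \Theta}(x, \lambda | \theta),
\end{align*}
after which subtracting $\zeta$ gives exactly $\bar g(\vartheta)(0)$. Since $\Phi(\xi_1,\theta) = \frac{1}{N_I}\sum_{k=1}^{N_I} \phi(X_k)\, p_{\Lambda|X,\Theta}(\lambda | X_k, \theta)$, it suffices, by the same conditioning and marginalization over $X_0 \sim p_{X|\Theta}(\cdot|\theta)$ used in Lemma \ref{lem:gbar}, to establish that each summand has expectation equal to the target. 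Defining $T_n$ as the analogue of $S_n$ but with $\phi(x_k)\,p_{\Lambda|X,\Theta}(\lambda|x_k,\theta)$ replacing the factor $1 - p_{\Lambda|X,\Theta}(\lambda|x_k,\theta)/\rho(\lambda)$ in the summand, I would prove by induction on $n$ that $T_n = \sum_{x \in \mathcal{X}} \phi(x)\, p_{X,\Lambda|\Theta}(x,\lambda|\theta)$, which is independent of $n$.

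The induction goes exactly as in Lemma \ref{lem:gbar}. The base case $n=1$ uses the invariance property \eqref{eq:invariance} to collapse the sum over $x_0$ (giving the stationary law $p_{X|\Theta}(x_1|\theta)$) and then the identity $p_{\Lambda|X,\Theta}(\lambda|x_1,\theta)\,p_{X|\Theta}(x_1|\theta) = p_{X,\Lambda|\Theta}(x_1,\lambda|\theta)$ to reach $\sum_{x_1} \phi(x_1)\,p_{X,\Lambda|\Theta}(x_1,\lambda|\theta)$. For the inductive step I would split the averaged sum $\frac{1}{n+1}\sum_{k=1}^{n+1}$ into the $k=1$ term and the remaining $k=2,\dots,n+1$ terms, exactly reproducing the decomposition $T_{n+1} = \bar T_{n+1} + \hat T_{n+1}$. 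The first piece telescopes to $\frac{1}{n+1} T_1$ after summing out all higher-index variables (each $q$ and $p_{\Lambda|X,\Theta}$ factor integrates to $1$ in the order $\lambda_{n+1} \to x_{n+1} \to \cdots \to \lambda_2 \to x_2$), and the second piece reduces to $\frac{n}{n+1} T_n$ using the same marginalization of the pair $(x_0,\lambda_0)$ against $p_{X,\Lambda|\Theta}(x_1,\lambda_1|\theta)$ as in the computation of $\hat S_{n+1}$; the inductive hypothesis then gives $T_{n+1} = \frac{1}{n+1}T_1 + \frac{n}{n+1}T_n$, equal to the common value.

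I do not anticipate a serious obstacle, since the argument is a direct transcription of Lemma \ref{lem:gbar} with the weight $1 - p_{\Lambda|X,\Theta}(\lambda|x_k,\theta)/\rho(\lambda)$ replaced by $\phi(x_k)\,p_{\Lambda|X,\Theta}(\lambda|x_k,\theta)$; the only point demanding care is the boundedness of $\phi$ (guaranteed by $C = \max_{x\in\mathcal{X}}|\phi(x)| < \infty$), which ensures every sum is absolutely convergent and the interchanges of summation are legitimate. The mildly delicate bookkeeping will be keeping the new factor $\phi(x_k)$ attached to the correct summation index through the telescoping in $\bar T_{n+1}$, but this is mechanical rather than conceptual.
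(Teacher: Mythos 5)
Your proposal is correct and follows essentially the same route as the paper: the authors likewise dispose of the components $u\in\mathcal{L}$ by reducing to Lemma \ref{lem:gbar}, and for $u=0$ they invoke ``the mathematical induction argument as in the proof of Lemma \ref{lem:gbar} with minor modifications.'' Your explicit construction of $T_n$ with the weight $\phi(x_k)\,p_{\Lambda|X,\Theta}(\lambda|x_k,\theta)$, the base case via \eqref{eq:invariance}, and the decomposition $T_{n+1}=\bar T_{n+1}+\hat T_{n+1}$ is precisely the induction the paper leaves implicit.
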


\begin{proof}
For $u \in \mathcal{L}$ the statement reduces to that of Lemma \ref{lem:gbar}, so it is sufficient to consider $u=0 $. The expectation on the right hand side can be written
\begin{align*}
    & E[h(\vartheta,\xi_1)(u) | \vartheta_0 = (\theta,\zeta)]   \\
    & = E[h(\vartheta, (X_1, \dots, X_{N_I}, \Lambda_1, \dots, \Lambda_{N_I}))(u) | \vartheta_0 = (\theta,\zeta)] \\
    & = 
    \sum_{x_0 \in \mathcal{X}} \cdots\sum_{x_{N_I} \in \mathcal{X}}
    \sum_{\lambda_0 \in \mathcal{L}}\cdots\sum_{\lambda_{N_I} \in \mathcal{L}}  \left(\frac{1}{N_I}\sum_{k=1}^{N_I} \phi(x_{k})p_{\Lambda | X,\Theta}(\lambda | x_k, \theta) - \zeta \right)  
    \\ & \qquad \times \left(\prod_{k=1}^{N_I}q(x_{k-1},x_{k} |  \lambda_{k-1})p_{\Lambda | X, \Theta}(\lambda_k | x_k, \theta) \right)
    p_{X | \Lambda}(x_0 |  \lambda_0)p_{\Lambda | \Theta}(\lambda_0 | \theta) 
 %   &  = \ldots \\  
 %   &  = \sum_{k=1}^{N_I}\sum_{x_k \in \mathcal{X}} \sum_{\lambda_{k-1} \in \mathcal{L}}
 %   \left(\frac{1}{N_I}\sum_{k=1}^{N_I} \psi^{(u)}(x_{k})p_{\Lambda | X,\Theta}(\lambda^{(u)} | x_k, \theta^\theta) - \theta^\psi(u) \right)  \\ & \qquad \times 
 %   p_{X | \Lambda}(x_k | \lambda_{k-1}) p_{\Lambda | \Theta}(\lambda_{k-1} | \theta) \\
 %   &=  \sum_{x \in \mathcal{X}} \sum_{\lambda' \in \mathcal{L}}
 %   \left( \psi^{(u)}(x)p_{\Lambda | X,\Theta}(\lambda^{(u)} | x, \theta^\theta) - \theta^\psi(u) \right)  
 %   p_{X | \Lambda}(x | \lambda') p_{\Lambda | \Theta}(\lambda' | \theta) \\
 %    &=  \sum_{x \in \mathcal{L}} 
 %    \left( \psi^{(u)}(x)p_{\Lambda | X,\Theta}(\lambda^{(u)} | x, \theta^\theta) - \theta^\psi(u) \right) 
 %    p_{X | \Theta}(x | \theta)  \\
 %    &=  \sum_{x \in \mathcal{L}} 
 %     \psi^{(u)}(x)p_{X, \Lambda | \Theta}(x, \lambda^{(u)} | \theta^\theta) - \theta^\psi(u) \\
 %     &= \bar g(\theta)(u).
\end{align*}
By applying the mathematical induction argument as in the proof of Lemma \ref{lem:gbar} with minor modifications, we can find that the last term in the previous display equals 
\[
    \sum_{\mathcal{X}} \phi(x) p_{X, \Lambda | \Theta}(x, \lambda | \theta) - \zeta.
\]
This completes the proof. 
\end{proof}

The limit ODE is given by
\begin{align}\label{eq:limitode2}
    \dot \vartheta  = \bar g(\vartheta) + z, 
\end{align}
where $z \in -C(\vartheta)$ is the reflection. To construct a Lyapunov function for the limit ODE, recall that %$F$ denotes the free energy,
$\theta_*$ denotes the optimal design parameter and let, for $\delta > 0$,  $V_\delta:\mathbb{R}^{|\mathcal{L}| }\times [-C,C] \to [0,\infty)$ be given by
\begin{align}\label{eq:Verg}
    V_\delta(\vartheta)\! =\! 
    \sum_{\sigma \in \mathcal{L}}\rho(\sigma) \left(1\!-\! \frac{p_{\Lambda | \Theta}(\sigma | \theta)}{\rho(\sigma)}\right)^2
     \!\! +  \delta \left(\sum_{x\in\mathcal{X}} \phi(x) p_{X,\Lambda | \Theta}(x,\lambda | \theta_*) \!-\! \zeta\right)^2\!\!.  
\end{align}
%We will show that $V_\delta$ is the desired Lyapunov function with the help of \eqref{eqn:jensen_diff}.

\begin{lemma} 
\label{lem_4_2}
Let $V_\delta$ be defined as in \eqref{eq:Verg} with $\delta = m^2/(2C^2)$, where  $C\doteq \max_{x\in\mathcal{X}}|\phi(x)|$ and
\[
    m\doteq \inf_{\theta\in H,\sigma\in\mathcal{L}}p_{\Lambda | \Theta}(\sigma| \theta)=\inf_{\theta\in H,\sigma\in\mathcal{L}}\left(\frac{e^{\theta(\sigma)-F(\sigma)}}{\sum_{\tau\in\mathcal{L}}e^{\theta(\tau)-F(\tau)}}\right)>0.
\]
Then $V_\delta$ is continuously differentiable and, for $\delta > 0$ sufficiently small, $\left\langle \nabla V_\delta(\vartheta),\bar g(\vartheta)\right\rangle \leq 0$ with equality if and only if $V_\delta(\vartheta) = 0$.
\end{lemma}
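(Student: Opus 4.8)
The plan is to compute $\langle \nabla V_\delta(\vartheta), \bar g(\vartheta)\rangle$ by splitting it into the $\theta$-block and the $\zeta$-component, and then to choose $\delta$ small enough that the $\zeta$-block cannot overcome the strict negativity already established for the $\theta$-block in Lemma \ref{lem_4}. First I would observe that $V_\delta$ decomposes as $V_\delta(\vartheta) = V(\theta) + \delta\,(\zeta_* - \zeta)^2$, where $V$ is the Lyapunov function of \eqref{eq:V} and $\zeta_* = \sum_{x}\phi(x)p_{X,\Lambda|\Theta}(x,\lambda|\theta_*)$ is a constant (it depends on $\theta_*$, not on $\theta$). Continuous differentiability is then immediate since both summands are smooth in $\vartheta$.

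\medskip\noindent
Next I would compute the gradient componentwise. For $u \in \mathcal{L}$, $\partial V_\delta/\partial\theta(u) = \partial V/\partial\theta(u)$, exactly the expression derived in Lemma \ref{lem_4}, since the second term of $V_\delta$ does not depend on $\theta$. For $u = 0$, $\partial V_\delta/\partial\zeta = -2\delta(\zeta_* - \zeta)$. Taking the inner product with $\bar g(\vartheta)$ from \eqref{eq:gbar2} gives
\begin{align*}
    \tfrac12\langle \nabla V_\delta(\vartheta), \bar g(\vartheta)\rangle = -\mathrm{Var}_{p_{\Lambda|\Theta}(\cdot|\theta)}(\bar g(\theta)) + \delta(\zeta_* - \zeta)\Big(\sum_{x\in\mathcal{X}}\phi(x)p_{X,\Lambda|\Theta}(x,\lambda|\theta) - \zeta\Big),
\end{align*}
where the first term reuses the variance identity from Lemma \ref{lem_4}. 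The awkward feature is that the bracket on the right involves $p_{X,\Lambda|\Theta}(\cdot|\theta)$ at the current $\theta$, whereas $\zeta_*$ is evaluated at $\theta_*$; I would rewrite the bracket as $(\zeta_* - \zeta) + \big(\sum_x \phi(x)[p_{X,\Lambda|\Theta}(x,\lambda|\theta) - p_{X,\Lambda|\Theta}(x,\lambda|\theta_*)]\big)$, producing a clean negative square $-\delta(\zeta_*-\zeta)^2$ plus a cross term.

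\medskip\noindent
The main obstacle, and the reason for the specific choice $\delta = m^2/(2C^2)$, is controlling that cross term. The idea is that the discrepancy $\sum_x \phi(x)[p_{X,\Lambda|\Theta}(x,\lambda|\theta) - p_{X,\Lambda|\Theta}(x,\lambda|\theta_*)]$ is bounded by a constant times $|p_{\Lambda|\Theta}(\lambda|\theta) - \rho(\lambda)|$, i.e.\ by a multiple of $|\bar g(\theta)(\lambda)|$, which in turn is dominated by $\sqrt{V(\theta)}$. Here the lower bound $m = \inf_{\theta\in H,\sigma}p_{\Lambda|\Theta}(\sigma|\theta) > 0$ and the bound $C = \max_x|\phi(x)|$ on $\phi$ enter, since $p_{X,\Lambda|\Theta}(x,\lambda|\theta) = p_{X|\Lambda}(x|\lambda)p_{\Lambda|\Theta}(\lambda|\theta)$ factors and $p_{X|\Lambda}(x|\lambda)$ is independent of $\theta$. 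I would then bound the cross term using Young's inequality, $\delta(\zeta_*-\zeta)\cdot(\text{discrepancy}) \leq \tfrac{\delta}{2}(\zeta_*-\zeta)^2 + \tfrac{\delta}{2}(\text{discrepancy})^2$, so that the calibrated $\delta$ guarantees the discrepancy-squared contribution is absorbed by the strictly negative variance term coming from the $\theta$-block. The upshot is $\langle \nabla V_\delta(\vartheta), \bar g(\vartheta)\rangle \leq 0$, with equality forcing both $\mathrm{Var}_{p_{\Lambda|\Theta}(\cdot|\theta)}(\bar g(\theta)) = 0$ (hence $V(\theta)=0$ and $\theta \in \{V=0\}$, so $p_{\Lambda|\Theta}(\cdot|\theta)=\rho$ and the discrepancy vanishes) and $\zeta = \zeta_*$; together these are precisely the condition $V_\delta(\vartheta) = 0$. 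The delicate point throughout is keeping the Young-inequality constants consistent with the stated $\delta = m^2/(2C^2)$, so I would track those constants carefully rather than absorbing them into an unspecified bound.
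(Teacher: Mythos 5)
Your overall strategy is the same as the paper's: decompose $V_\delta(\vartheta)=V(\theta)+\delta(\zeta_*-\zeta)^2$, reuse the variance identity from Lemma \ref{lem_4} for the $\theta$-block, and use a Young-type inequality to split the $\zeta$-contribution into a harmless negative square plus a cross term of size $\tfrac{\delta}{2}\bigl(\sum_x\phi(x)[p_{X,\Lambda|\Theta}(x,\lambda|\theta)-p_{X,\Lambda|\Theta}(x,\lambda|\theta_*)]\bigr)^2$, which is then bounded by $\delta C^2\bigl(1-p_{\Lambda|\Theta}(\lambda|\theta)/\rho(\lambda)\bigr)^2$. (Minor point: your displayed inner product has the wrong sign on the $\delta$-term --- it should be $-\delta(\zeta_*-\zeta)\bigl(\sum_x\phi(x)p_{X,\Lambda|\Theta}(x,\lambda|\theta)-\zeta\bigr)$ --- though your subsequent prose is consistent with the correct sign.)

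The genuine gap is the final absorption step. You assert that the calibrated $\delta$ guarantees the discrepancy-squared term ``is absorbed by the strictly negative variance term,'' but strict negativity of $-\mathrm{Var}_{p_{\Lambda|\Theta}(\cdot|\theta)}(\bar g(\theta))$ away from $\{V=0\}$ is not enough: both this variance and $\bigl(1-p_{\Lambda|\Theta}(\lambda|\theta)/\rho(\lambda)\bigr)^2$ vanish on the same set, so one needs the \emph{quantitative} comparison
\[
\mathrm{Var}_{p_{\Lambda|\Theta}(\cdot|\theta)}(\bar g(\theta))\;\geq\; m^2\left(1-\frac{p_{\Lambda|\Theta}(\lambda|\theta)}{\rho(\lambda)}\right)^2,
\]
uniformly over $H$, and a ratio of two continuous functions with a common zero set need not be bounded. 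Your intermediate bound ``discrepancy $\lesssim\sqrt{V(\theta)}$'' does not close this, because $V(\theta)$ is a $\rho$-weighted sum of squares while the negative term is a variance under $p_{\Lambda|\Theta}(\cdot|\theta)$, and you never relate the two. This comparison is exactly where the constants $m$ and $C$ in $\delta=m^2/(2C^2)$ come from, and it is the bulk of the paper's proof: writing $c_\tau=p_{\Lambda|\Theta}(\tau|\theta)/\rho(\tau)$, the paper notes that if $c_\lambda\neq 1$ there must be another index $\tau'$ with $c_{\tau'}$ on the opposite side of $1$ (since both $p_{\Lambda|\Theta}(\cdot|\theta)$ and $\rho$ sum to one), so $(c_\lambda-c_{\tau'})^2\geq(1-c_\lambda)^2$, and then invokes the telescoping variance identity of Lemma \ref{lem:jensen_diff} to get $\mathrm{Var}\geq\frac{p_\lambda p_{\tau'}}{p_\lambda+p_{\tau'}}(c_\lambda-c_{\tau'})^2\geq m^2(1-c_\lambda)^2$. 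Without supplying this (or an equivalent lower bound on the variance), the proof is incomplete at its decisive step.
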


\begin{proof} 
It follows immediately from the definition that $V_\delta$ is continuously differentiable. It remains
to prove $\left\langle \nabla V_\delta(\vartheta),\bar g(\vartheta) \right\rangle \leq 0$ with
equality if and only if $V(\vartheta) = 0$.

From the proof of Lemma \ref{lem_4} it follows that for $u \in \mathcal{L}$
\begin{align*}
\frac{\partial V_\delta(\vartheta)}{\partial \vartheta(u)}  &=
 \sum_{\sigma \in \mathcal{L}}2
 \left(1-\frac{p_{ \Lambda|\Theta}(\sigma | \theta)}{\rho(\sigma)}\right) \left(-\frac{\partial p_{\Lambda | \Theta}(\sigma | \theta)}{\partial \theta(u)}\right) \\
  & =2 \sum_{\sigma \in \mathcal{L}} \bar g(\vartheta)(\sigma) p_{\Lambda | \Theta}(\sigma | \theta)p_{\Lambda | \Theta}(u | \theta)- 2\bar g(\vartheta)(u)p_{\Lambda | \Theta}(u | \theta). 
\end{align*}
For $u =0$ we have
\begin{align*}
\frac{\partial V_\delta(\vartheta)}{\partial \vartheta(u)} & =
- 2 \delta \bar g((\theta_*,\zeta))(u). 
\end{align*}
Taking the inner product with $\bar g(\vartheta)$ yields,
\begin{align*}
 &\frac{1}{2}\left\langle \nabla V_\delta(\vartheta),\bar g(\vartheta)\right\rangle\\
 & =\sum_{\tau \in \mathcal{L}}\left[\sum_{\sigma \in \mathcal{L}} \bar g(\vartheta)(\sigma) p_{\Lambda | \Theta}(\sigma | \vartheta)
 p_{\Lambda | \Theta}(\tau | \vartheta)\right] \bar g(\vartheta)(\tau)\\
 & \qquad-\sum_{\tau \in \mathcal{L}} \bar g(\vartheta)(\tau)p_{\Lambda | \Theta}(\tau | \vartheta) \bar g(\vartheta)(\tau) - \delta \bar g((\theta_*,\zeta))(0) \cdot  \bar g((\theta,\zeta))(0) \\
  & =\left(\sum_{\tau \in \mathcal{L}} \bar g(\vartheta)(\tau)p_{\Lambda | \Theta}(\tau | \vartheta)\right)^{2}
 - \sum_{\tau \in \mathcal{L}}  [\bar g(\vartheta)(\tau)]^2 p_{\Lambda | \Theta}(\tau | \vartheta) \\
 & \qquad-  \delta \bar g((\theta_*,\zeta))(0) \cdot  \bar g((\theta,\zeta))(0)  \\
  & \leq\left(\sum_{\tau \in \mathcal{L}} \bar g(\vartheta)(\tau)p_{\Lambda | \Theta}(\tau | \vartheta)\right)^{2}
 - \sum_{\tau \in \mathcal{L}}  [\bar g(\vartheta)(\tau)]^2 p_{\Lambda | \Theta}(\tau | \vartheta) \\
&\qquad + \frac{\delta}{2} \left[\bar g((\theta_*,\zeta))(0) -\bar g((\theta,\zeta))(0)\right] \left[\bar g((\theta_*,\zeta))(0)-\bar g((\theta,\zeta))(0)\right] \\ 
& \qquad - \frac{\delta}{2} [\bar g((\theta,\zeta))(0)]^2. 
\end{align*}
The inequality holds since for any $x,y\in\mathbb{R}$, $-2xy\leq (x-y)^2-y^2$.
Note that the first and third terms on the right of the inequality are always non-positive. The middle term can be written as
\begin{align}\label{eq:middle}
    \frac{\delta}{2}  \left( \sum_{x\in\mathcal{X}} \phi(x)p_{X| \Lambda}(x | \lambda)\right)^2\left(p_{\Lambda | \Theta}(\lambda | \theta_*) - p_{\Lambda | \Theta}(\lambda | \theta) \right)^2. 
\end{align}
The sum appearing in the square is bounded by $C\doteq \max_{x\in\mathcal{X}} |\phi(x)|<\infty$, and since 
$p_{\Lambda | \Theta}(\lambda | \theta_*) = \rho(\lambda)$ and $\rho(\lambda) \leq 1$, the expression in \eqref{eq:middle} can be bounded above by 
\begin{align*}
    \delta C^2  \left(1-\frac{p_{\Lambda | \Theta}(\lambda| \theta)}{\rho(\lambda)} \right)^2.
\end{align*}
Consequently, to show that $\frac{1}{2}\left\langle \nabla V_\delta(\vartheta),\bar g(\vartheta)\right\rangle\leq 0 $ for all $\vartheta$, it suffices to show that for all $\vartheta$
\begin{align}\label{eqn:diff}
    \left(\sum_{\tau \in \mathcal{L}} \bar g(\vartheta)(\tau)p_{\Lambda | \Theta}(\tau | \vartheta)\right)^{2}
 - \sum_{\tau\in \mathcal{L}}  [\bar g(\vartheta)(\tau)]^2 p_{\Lambda | \Theta}(\tau | \vartheta) +  \delta C^2  \left(1-\frac{p_{\Lambda | \Theta}(\lambda| \theta)}{\rho(\lambda)} \right)^2
\end{align}
is always non-positive.

%\begin{align*}
%\frac{1}{2}\langle \nabla V_\delta (\vartheta), \bar g(\vartheta)\rangle \leq -[\bar g(\vartheta)(\mathcal{L})]^T (D-\delta C^2 I)[\bar g(\vartheta)(\mathcal{L})] - \frac{\delta}{2} [\bar g(\vartheta)(0)]^2 . 
%\end{align*}
%By taking $\delta$ sufficiently small, 
To prove the last statement, recall that 
\[
    \bar g(\vartheta)(\tau) = 1-\frac{p_{\Lambda | \Theta}(\tau | \theta)}{\rho(\tau)}, \text{for } \tau \in \mathcal{L},
\]
and in order to make the discussion easier,  the following simplified notation will be used. Suppose that $\mathcal{L}=\{\lambda_1,\dots, \lambda_n\}$ for some $n\in\mathbb{N}$, denote $p_{\Lambda | \Theta}(\lambda_i | \theta)$ by $p_i$, $\rho(\lambda_i)$ by $\rho_i$, and let $c_i=p_i/\rho_i$ for all $i$. To show that \eqref{eqn:diff} is non-positive, it is equivalent to show that 
\[
    \left(\sum_{i=1}^n p_i(1-c_i)\right)^2 - \sum_{i=1}^n p_i(1-c_i)^2 + \delta C^2(1-c_k)^2 \leq 0,
\]
for some $k\in\{1,\dots,n\}$. Without loss of generality, assume $k=1$. 
Suppose $c_1=1$, i.e. $\rho_1= p_1$. Then, by Jensen's inequality,
\begin{align*}
    &\left(\sum_{i=1}^n p_i(1-c_i)\right)^2 - \sum_{i=1}^n p_i(1-c_i)^2 + \delta C^2(1-c_1)^2 \\
    &\qquad  \left(\sum_{i=1}^n p_i(1-c_i)\right)^2 - \sum_{i=1}^n p_i(1-c_i)^2 \leq 0.
\end{align*}
Suppose $c_1 \neq 1$. If $c_1 > 1$, then there must be some $j \neq 1$ such that $c_j<1$. Otherwise, $1 = \sum_{i=1}^n p_i > \sum_{i=1}^n \rho_i = 1$, which is a contradiction.  Similarly, if $c_1<1$, then there must be some $j\neq 1$ such that $c_j>1$ by the same argument. In either case, we may assume that $j = 2$, and it follows that $(c_1-c_2)^2>(1-c_1)^2$. With this inequality and the use of Lemma \ref{lem:jensen_diff}, it follows that
\begin{align*}
    &\left(\sum_{i=1}^n p_i(1-c_i)\right)^2 - \sum_{i=1}^n p_i(1-c_i)^2+\delta C^2(1-c_1)^2\\
    & \qquad = - \sum_{j=2}^n \frac{p_j\sum_{k=1}^{j-1} p_k}{\sum_{k=1}^j p_k}\left( (1-c_j)-\frac{\sum_{k=1}^{j-1} (1-c_k)p_k}{\sum_{k=1}^{j-1} p_k} \right)^2+\delta C^2(1-c_1)^2\\
    & \qquad \leq -\frac{p_2p_1}{p_1+p_2}((1-c_2)-(1-c_1))^2 +\delta C^2(1-c_1)^2\\
    & \qquad = -\frac{p_2p_1}{p_1+p_2}(c_1-c_2)^2 +\delta C^2(1-c_1)^2\\
    &\qquad \leq \left(-\frac{p_2p_1}{p_1+p_2}+\delta C^2\right)(1-c_1)^2\\
    &\qquad \leq (-m^2+\delta C^2)(1-c_1)^2\leq 0.
\end{align*}
Equality can only occur when $c_i$ are equal to $1$ for all $i$. This implies that $p_{\Lambda | \Theta}(\tau | \theta) = \rho(\tau)$ for each $\tau \in \mathcal{L}$, from which the conclusion $\theta =\theta_*$ follows. Consequently, the term in \eqref{eq:middle} is also zero and therefore $\frac{1}{2}\left\langle \nabla V_\delta(\vartheta),\bar g(\vartheta)\right\rangle
= 0$ if and only if $V_\delta(\vartheta) = 0$. 
This completes the proof.
\end{proof}

The following elementary result is used in the proof of Lemma \ref{lem_4_2}. 

\begin{lemma} \label{lem:jensen_diff}
For any integer $n\geq 2$, real numbers $\{x_i\}_{i=1}^n \subset\mathbb{R}$ and probability $\{p_i\}_{i=1}^n$ on $\{1, \dots, n\}$, 
\begin{align}\label{eqn:jensen_diff}
    \left(\sum_{i=1}^n x_i p_i\right)^2 - \sum_{i=1}^n x_i^2 p_i = - \sum_{j=2}^n \frac{p_j\sum_{k=1}^{j-1} p_k}{\sum_{k=1}^j p_k}\left( x_j-\frac{\sum_{k=1}^{j-1} x_kp_k}{\sum_{k=1}^{j-1} p_k} \right)^2.
\end{align}
\end{lemma}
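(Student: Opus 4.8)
The plan is to prove \eqref{eqn:jensen_diff} by a telescoping argument (equivalently, by induction on $n$). The guiding observation is that, because $\sum_{i=1}^n p_i = 1$, the left-hand side is exactly $-\mathrm{Var}(X)$ for the random variable $X$ that takes the value $x_i$ with probability $p_i$. The right-hand side should then be read as the sequential decomposition of this variance obtained by incorporating the points $x_1, x_2, \dots, x_n$ one at a time: the $j$th summand measures the extra spread contributed when $x_j$ is appended to the weighted average of $x_1, \dots, x_{j-1}$.

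The one genuinely useful idea is that a direct induction on the stated formula does not close, because restricting attention to the first $j$ points destroys the normalization $\sum_i p_i = 1$. To repair this I would introduce the partial sums $P_j = \sum_{k=1}^j p_k$ and $S_j = \sum_{k=1}^j p_k x_k$, and define the auxiliary quantity
\[
\Phi_j = \frac{S_j^2}{P_j} - \sum_{i=1}^j p_i x_i^2, \qquad j = 1, \dots, n.
\]
Since $P_n = 1$, one has $\Phi_n = \bigl(\sum_{i=1}^n x_i p_i\bigr)^2 - \sum_{i=1}^n x_i^2 p_i$, which is precisely the left-hand side of \eqref{eqn:jensen_diff}. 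Writing $T_j = \frac{p_j P_{j-1}}{P_j}\bigl(x_j - \frac{S_{j-1}}{P_{j-1}}\bigr)^2$ for the $j$th term on the right, it therefore suffices to prove that $\Phi_n = -\sum_{j=2}^n T_j$.

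I would obtain this by showing the one-step identity $\Phi_j - \Phi_{j-1} = -T_j$ for every $j \ge 2$, after which the claim follows by telescoping, together with the base case $\Phi_1 = \frac{(p_1 x_1)^2}{p_1} - p_1 x_1^2 = 0$ (which matches the empty sum at $n=1$). To verify the increment I would substitute $S_j = S_{j-1} + p_j x_j$ and $P_j = P_{j-1} + p_j$, place $\frac{S_j^2}{P_j} - \frac{S_{j-1}^2}{P_{j-1}} - p_j x_j^2$ over the common denominator $P_j P_{j-1}$, and expand. Using $P_j P_{j-1} = P_{j-1}^2 + p_j P_{j-1}$ to cancel the $x_j^2$ terms, the numerator collapses to $-p_j\,(x_j P_{j-1} - S_{j-1})^2$, which is exactly $-T_j\,P_j P_{j-1}$; dividing back through gives $\Phi_j - \Phi_{j-1} = -T_j$.

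The main obstacle here is nothing more than careful algebraic bookkeeping in that final expansion, and even that becomes routine once the normalized quantity $\Phi_j$ has been introduced; isolating the correct normalization by $P_j$ is the only step that requires any thought. No analytic input is needed, and the hypothesis $\sum_i p_i = 1$ is invoked only at the very end, to identify $\Phi_n$ with the stated left-hand side.
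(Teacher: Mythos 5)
Your proof is correct. The one-step identity $\Phi_j-\Phi_{j-1}=-T_j$ checks out: the numerator over the common denominator $P_jP_{j-1}$ does collapse to $-p_j\,(x_jP_{j-1}-S_{j-1})^2$, the base case $\Phi_1=0$ is right, and $P_n=1$ identifies $\Phi_n$ with the left-hand side. The paper proves the same identity by induction on $n$, but organized in the opposite direction: it peels off the last point $x_{n+1}$, groups the first $n$ points into a single pseudo-point of weight $1-p_{n+1}$ located at their weighted mean, applies the two-point case to that pair, and then invokes the induction hypothesis for the renormalized weights $p_i/(1-p_{n+1})$. The two arguments produce the identical terms $T_j$ and rest on the same sequential variance decomposition, so the substance is the same; the difference is that your unnormalized auxiliary quantity $\Phi_j=S_j^2/P_j-\sum_{i\le j}p_ix_i^2$ avoids the renormalization step entirely and reduces the whole induction to a single clean algebraic increment, which is arguably tidier, whereas the paper's version makes the probabilistic reading (conditioning on whether the last point is drawn) more transparent. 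Either write-up would be acceptable.
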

\begin{proof}
When $n=2$, since $p_1+p_2=1$, it follows that the left-hand-side of \eqref{eqn:jensen_diff} is given by, 
\begin{align*}
    &(x_1p_1+x_2p_2)^2 - [x_1^2p_1+x_2^2p_2]\\
    &\quad=(x_1p_1+x_2(1-p_1))^2 - [x_1^2p_1+x_2^2(1-p_1)]\\
    &\quad = x_1^2p_1^2 +2x_1x_2p_1(1-p_1)+x_2^2(1-p_1)^2 - x_1^2p_1-x_2^2(1-p_1)\\
    &\quad = x_1^2p_1(p_1-1)+2x_1x_2p_1(1-p_1)-x_2^2p_1(1-p_1)\\
    &\quad = p_1(1-p_1)[-x_1^2+2x_1x_2-x_2^2] = -p_1(1-p_1)(x_1-x_2)^2.
\end{align*}
Additionally, the right-hand-side of \eqref{eqn:jensen_diff} equals
\[
     -  \frac{p_2 p_1}{\sum_{k=1}^2 p_k}\left( x_2-\frac{ x_1p_1}{ p_1} \right)^2 = -p_1p_2 (x_1-x_2)^2=-p_1(1-p_1)(x_1-x_2)^2.
\]
Thus, \eqref{eqn:jensen_diff} holds when $n=2$.

Now suppose that \eqref{eqn:jensen_diff} holds for some $n \geq 2$, then
\begin{align*}
    \left(\sum_{i=1}^{n+1} x_i p_i\right)^2 &= \left(x_{n+1}p_{n+1}+(1-p_{n+1})\sum_{i=1}^n x_i \frac{p_i}{1-p_{n+1}}\right)^2\\
    & = x_{n+1}^2 p_{n+1} + \left(\sum_{i=1}^n x_i \frac{p_i}{1-p_{n+1}}\right)^2 (1-p_{n+1})\\
    &\qquad - p_{n+1}(1-p_{n+1})\left(x_{n+1}-\sum_{i=1}^n x_i \frac{p_i}{1-p_{n+1}}\right)^2,
\end{align*}
where the second equality comes from applying \eqref{eqn:jensen_diff} with two terms. 

On the other hand, by assumption, \eqref{eqn:jensen_diff} holds for $n$,  which implies that 
\begin{align*}
    &\left(\sum_{i=1}^n x_i \frac{p_i}{1-p_{n+1}}\right)^2  \\&\qquad= \sum_{i=1}^n x_i^2 \frac{p_i}{1-p_{n+1}}  - \sum_{j=2}^n \frac{\frac{p_j}{1-p_{n+1}}\sum_{k=1}^{j-1} p_k}{\sum_{k=1}^j p_k}\left( x_j-\frac{\sum_{k=1}^{j-1} x_kp_k}{\sum_{k=1}^{j-1} p_k} \right)^2. 
\end{align*}
Consequently, 
\begin{align*}
    &\left(\sum_{i=1}^n x_i \frac{p_i}{1-p_{n+1}}\right)^2 (1-p_{n+1})\\
    &\qquad= \sum_{i=1}^n x_i^2 p_i  - \sum_{j=2}^n \frac{p_j\sum_{k=1}^{j-1} p_k}{\sum_{k=1}^j p_k}\left( x_j-\frac{\sum_{k=1}^{j-1} x_kp_k}{\sum_{k=1}^{j-1} p_k} \right)^2.
\end{align*}
Using the equality in the last display, we conclude that
\begin{align*}
    \left(\sum_{i=1}^{n+1} x_i p_i\right)^2 
    & = x_{n+1}^2 p_{n+1} + \sum_{i=1}^n x_i^2 p_i  - \sum_{j=2}^n \frac{p_j\sum_{k=1}^{j-1} p_k}{\sum_{k=1}^j p_k}\left( x_j-\frac{\sum_{k=1}^{j-1} x_kp_k}{\sum_{k=1}^{j-1} p_k} \right)^2\\
    &\qquad - p_{n+1}(1-p_{n+1})\left(x_{n+1}-\sum_{i=1}^n x_i \frac{p_i}{1-p_{n+1}}\right)^2\\
    & = \sum_{i=1}^{n+1} x_i^2 p_i  - \sum_{j=2}^{n+1} \frac{p_j\sum_{k=1}^{j-1} p_k}{\sum_{k=1}^j p_k}\left( x_j-\frac{\sum_{k=1}^{j-1} x_kp_k}{\sum_{k=1}^{j-1} p_k} \right)^2.
\end{align*}
The proof is complete by mathematical induction.
\end{proof}

\subsection{Convergence of ergodic averages}

In this subsection, the main theorem on the almost sure convergence of the estimates of the extended design parameters $\{\vartheta_n\}$ of the AWH algorithm is provided in Theorem \ref{thm:asconvergence_ext}. %The subsequent Corollary \ref{cor:fediff} provides the result on the almost sure convergence of estimates of free energy differences. 
\begin{theorem}\label{thm:asconvergence_ext}
Let $\{\vartheta_n\}$ be given by  \eqref{eq:recursion_ext}, with target distribution $\rho$, $\rho(\lambda) > 0, \lambda \in \mathcal{L}$, $H$ a hyper-rectangle in $\mathbb{R}^{|\mathcal{L}_0|}$, and $V_\delta$ be given by \eqref{eq:Verg}, where $\delta > 0$ is sufficiently small as in Lemma \ref{lem_4_2}. Assume that $H$ is large enough so that $\{\vartheta\in H:V_\delta(\vartheta)=0\}$ is non-empty. Then  $\{\vartheta_n\}$ converges almost surely to $\{\vartheta \in H: V_\delta(\vartheta) = 0\}$. 
\end{theorem}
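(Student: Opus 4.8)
The plan is to mirror the proof of Theorem \ref{thm:asconvergence} essentially line by line, since the recursion \eqref{eq:recursion_ext} for the extended design parameter $\vartheta_n = (\theta_n, \zeta_n)$ has exactly the same structure as the recursion \eqref{eq:recursion} of Section \ref{sec:convergence}: it is a projected stochastic approximation with state-dependent noise driven by the \emph{same} Markov chain $\{\xi_n\}$. First I would invoke Theorem \ref{thm_1} in the Appendix to conclude that, once its hypotheses are verified, $\{\vartheta_n\}$ converges almost surely to a limit set of the projected limit ODE \eqref{eq:limitode2}. Then, appealing to Lemma \ref{lem_4_2} together with \cite[Theorem 4.2.3]{KY03}, that limit set is contained in $\{\vartheta \in H : V_\delta(\vartheta) = 0\}$, which is precisely the asserted conclusion.

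It remains to re-verify the conditions (5.1.1), (A1.1), (A1.2), (A1.4), (6.2), (A6.1), (A6.2), and (A4.3.1) for the extended problem. The step sizes $\varepsilon_n = 1/(n+1)$ are unchanged, so (5.1.1), and later (A2.4), (A3.1), (A3.2), go through verbatim. For (A1.1) and (A2.1) it suffices to note that the extended $h$ is again uniformly bounded: the new ($u=0$) coordinate equals $\Phi(\xi,\theta) - \zeta$, and since $|\phi(x)| \leq C$ and $p_{\Lambda | X, \Theta}(\lambda \mid \cdot, \theta) \in [0,1]$ one has $|\Phi(\xi,\theta)| \leq C$, while the projection keeps $\zeta \in [-C,C]$, so $|h(\vartheta,\xi)(0)| \leq 2C$; the remaining coordinates are bounded as in \eqref{eq:hbound}. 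Continuity of the new coordinate in $\vartheta$ (it is affine in $\zeta$ and depends on $\theta$ only through $p_{\Lambda | X, \Theta}$, which is continuous) gives (A1.2), and boundedness of $h$ together with $\sum_n \varepsilon_n^2 < \infty$ yields (A1.4) exactly as before.

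The point I would emphasise is that the driving noise is unaffected by the extension. The transition probability \eqref{eq:transitionprob} of $\{\xi_n\}$ depends only on $\theta$ and not on $\zeta$, so the chain is the same finite-state, irreducible and aperiodic---hence geometrically ergodic---chain used in Section \ref{sec:convergence}. Consequently condition (6.2) holds as observed in connection with \eqref{eq:transitionprob}, and the martingale-rate conditions (A6.1), (A6.2) reduce, via (A3.3), to the geometric decay of the same summands as before; the extra coordinate contributes only the bounded, $\theta$-dependent quantity $\bar g(\vartheta)(0)$, whose identification as the ergodic average of $h(\vartheta,\xi_1)(0)$ is supplied by Lemma \ref{lem:gbar2}. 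Finally (A4.3.1) holds by the assumption on $H$.

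The main obstacle is not in this theorem but has already been dispatched in Lemma \ref{lem_4_2}: the delicate feature is that the $\zeta$-coordinate of the ODE is only asymptotically stabilised through its coupling to $\theta$, so the naive sum of squares is not a Lyapunov function and one must weight the two blocks by a sufficiently small $\delta = m^2/(2C^2)$ and, crucially, evaluate the $\zeta$-block of the drift at $\theta_*$ rather than at the running $\theta$. With that construction of $V_\delta$ in hand, the only genuinely new checks at the level of the theorem are the boundedness and continuity of the augmented coordinate of $h$, both of which are immediate from the truncation $\zeta \in [-C,C]$ and the choice $C = \max_{x\in\mathcal{X}}|\phi(x)|$.
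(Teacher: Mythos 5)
Your proposal is correct and follows essentially the same route as the paper: the paper's proof likewise reduces the theorem to the argument for Theorem \ref{thm:asconvergence}, substituting Lemmas \ref{lem:gbar2} and \ref{lem_4_2} for Lemmas \ref{lem:gbar} and \ref{lem_4}, and notes that the only modification needed is the bound $|h(\vartheta,\xi)(0)| = |\Phi(\xi,\theta)-\zeta| \leq 2\max_{x\in\mathcal{X}}|\phi(x)|$ replacing \eqref{eq:hbound} on the new coordinate. Your additional observations (that the transition probability \eqref{eq:transitionprob} depends only on $\theta$ and not on $\zeta$, so the driving chain and hence conditions (6.2), (A6.1), (A6.2) are unaffected) are correct and make explicit what the paper leaves implicit.
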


\begin{proof}
The proof of the statement in the theorem follows from almost identical arguments to those in the proof of Theorem \ref{thm:asconvergence}, with Lemma \ref{lem:gbar} and \ref{lem_4} replaced by Lemma \ref{lem:gbar2} and \ref{lem_4_2}, respectively. The main difference is that the upper bound \eqref{eq:hbound} of $h$ needs to be modified to the extended definition of $h$. 
To this end \eqref{eq:hbound} is replaced by
\begin{align*}
    |h(\vartheta,\xi)(u)| = \left\{\begin{array}{ll} 
    \left|1-\frac{\sum_{k=1}^{N_I} p_{\Lambda | X, \Theta}(u | \xi(k), \theta)}{N_I\rho(u)}\right|\leq 1+\frac{1}{\rho(u)}, & \text{ if } u \in \mathcal{L}, \\
    |\Phi(\xi, \theta) - \zeta|\leq 2 \max\limits_{x\in\mathcal{X}}|\phi(x)|, & \text{ if }  u =0. \end{array} \right.
\end{align*}
\end{proof}

\begin{corollary} \label{cor:fediff_ext}
Let $\{\vartheta_n\}$ be given by  \eqref{eq:recursion_ext}, with target distribution $\rho$, $\rho(\lambda) > 0, \lambda \in \mathcal{L}$, $H$ a hyper-rectangle in $\mathbb{R}^{|\mathcal{L}_0|}$, and $V_\delta$ be given by \eqref{eq:Verg}, where $\delta > 0$ is sufficiently small as in Lemma \ref{lem_4_2}. Assume that $H$ is large enough so that $\{\vartheta\in H:V_\delta(\vartheta)=0\}$ is non-empty. Then  for a fixed $\lambda \in \mathcal{L}$ and a function $\psi:\mathcal{X}\rightarrow\mathbb{R}$, the adaptive ergodic average
\begin{align*} 
    \frac{\sum_{n=0}^{N-1} \sum_{k=1}^{N_I}\psi(X_{nN_I + k})p_{\Lambda | X, \Theta}(\lambda | X_{nN_I + k}, \theta_n)}{\sum_{n=0}^{N-1}\sum_{k=1}^{N_I}p_{X | \Lambda, \Theta}(\lambda | X_{nN_I + k}, \theta_n)} 
\end{align*}
given in \eqref{eq:ergest} converges almost surely to 
\[
    \sum_{x\in\mathcal{X}}\psi(x)p_{X | \Lambda}(x | \lambda).
\] 
\end{corollary}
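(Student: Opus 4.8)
The plan is to deduce the corollary directly from Theorem \ref{thm:asconvergence_ext} by applying it to two separate choices of the test function $\phi$ and then passing to the ratio. First I would record the shape of the zero set of the Lyapunov function. Since the second summand of $V_\delta$ in \eqref{eq:Verg} is $\delta(\zeta_* - \zeta)^2$ with the \emph{constant} $\zeta_* = \sum_{x\in\mathcal{X}}\phi(x)p_{X,\Lambda|\Theta}(x,\lambda|\theta_*)$, while the first summand vanishes exactly when $p_{\Lambda|\Theta}(\cdot|\theta) = \rho$, the set $\{\vartheta \in H : V_\delta(\vartheta) = 0\}$ consists precisely of those $\vartheta = (\theta,\zeta)$ with $p_{\Lambda|\Theta}(\cdot|\theta) = \rho$ and $\zeta = \zeta_*$. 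In particular every point of this set has the same last coordinate $\zeta_*$.

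Consequently, since the Euclidean distance from $\vartheta_n = (\theta_n,\zeta_n)$ to any point of the zero set is at least $|\zeta_n - \zeta_*|$, the almost sure convergence $\vartheta_n \to \{\vartheta \in H : V_\delta(\vartheta) = 0\}$ granted by Theorem \ref{thm:asconvergence_ext} forces $\zeta_n \to \zeta_*$ almost surely; that is, $\hat\phi_N \to \sum_{x\in\mathcal{X}}\phi(x)p_{X,\Lambda|\Theta}(x,\lambda|\theta_*)$ almost surely. I would then apply this conclusion twice on the same probability space, keeping the underlying Markov chain and the sequence $\{\theta_n\}$ fixed and only appending the extra coordinate $\zeta_n = \hat\phi_N$. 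Taking $\phi = \psi$ gives $\hat\psi_N \to \sum_{x}\psi(x)p_{X|\Lambda}(x|\lambda)\rho(\lambda)$ almost surely; taking $\phi \equiv 1$ (so $C=1$) gives $\hat 1_N \to \sum_{x} p_{X|\Lambda}(x|\lambda)\rho(\lambda) = \rho(\lambda)$ almost surely, where I use $p_{X,\Lambda|\Theta}(x,\lambda|\theta_*) = p_{X|\Lambda}(x|\lambda)p_{\Lambda|\Theta}(\lambda|\theta_*) = p_{X|\Lambda}(x|\lambda)\rho(\lambda)$. For both applications the hypotheses of Theorem \ref{thm:asconvergence_ext} are in force once $H$ is enlarged so that the corresponding zero set is non-empty.

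Finally, since the common factors $1/(NN_I)$ cancel, the estimator in \eqref{eq:ergest} equals $\hat\psi_N/\hat 1_N$, as already noted in Subsection \ref{subsec:2.2}. Because $\hat 1_N \to \rho(\lambda) > 0$ almost surely, the ratio converges almost surely and its limit is
\[
\frac{\sum_{x\in\mathcal{X}} \psi(x)\,p_{X|\Lambda}(x|\lambda)\,\rho(\lambda)}{\rho(\lambda)} = \sum_{x\in\mathcal{X}}\psi(x)\,p_{X|\Lambda}(x|\lambda),
\]
which is the claimed limit.

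The only genuinely delicate point is the passage from the set-convergence of $\vartheta_n$ to the scalar convergence $\zeta_n \to \zeta_*$; this relies on the observation that the zero set of $V_\delta$ is flat in the $\zeta$-direction, which in turn is a consequence of having frozen $\theta_*$ (rather than $\theta$) inside the second summand of \eqref{eq:Verg}. The positivity of the denominator limit, $\rho(\lambda) > 0$, is what makes the division legitimate and is guaranteed by the standing assumption on $\rho$; everything else is a routine application of the already-established convergence theorem together with the continuous-mapping step for the quotient.
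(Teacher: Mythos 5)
Your proposal is correct and follows essentially the same route as the paper: invoke Theorem \ref{thm:asconvergence_ext} for $\phi=\psi$ and $\phi\equiv 1$, observe that every point of $\{\vartheta\in H:V_\delta(\vartheta)=0\}$ has last coordinate $\zeta_*=\sum_{x}\phi(x)p_{X,\Lambda|\Theta}(x,\lambda|\theta_*)$ so that $\zeta_n\to\zeta_*$ almost surely, and then pass to the quotient $\hat\psi_N/\hat 1_N$. Your explicit remarks that the zero set is flat in the $\zeta$-direction (which upgrades set-convergence to scalar convergence) and that the denominator limit $\rho(\lambda)>0$ legitimizes the division are points the paper leaves implicit, so your write-up is if anything slightly more careful.
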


 \begin{proof}
 Recall that $\vartheta_n = (\theta_n,\zeta_n)$. By Theorem \ref{thm:asconvergence_ext} the sequence $\{\vartheta_n\}$ converges almost surely to the set $\{\vartheta \in H : V_\delta(\vartheta) = 0\}$. For any $\vartheta \in \mathbb{R}^{|\mathcal{L}|}\times[-C,C]$ with $V_\delta(\vartheta) = 0$ it follows that 
 \[
    \zeta = \sum_{x\in\mathcal{X}} \phi(x) p_{X,\Lambda | \Theta}(x,\lambda | \theta_*).
 \]
 Consequently, for any $\phi:\mathcal{X}\rightarrow\mathbb{R}$,
 \[
    \zeta_N= \hat \phi_N \doteq \frac{1}{N\,N_I}\sum_{n=0}^{N-1} \sum_{k=1}^{N_I}\phi(X_{nN_I + k})p_{\Lambda | X, \Theta}(\lambda | X_{nN_I + k}, \theta_n)
 \]
 converges almost surely to 
 \[
    \zeta = \sum_{x\in\mathcal{X}} \phi(x) p_{X,\Lambda | \Theta}(x,\lambda | \theta_*).
 \]
 Moreover, by taking $\phi=\psi$ and $\phi\equiv 1$, and since the ratio of two almost surely convergent sequences is still an almost surely convergent sequence, we know that
\begin{align*} 
    \frac{\sum_{n=0}^{N-1} \sum_{k=1}^{N_I}\psi(X_{nN_I + k})p_{\Lambda | X, \Theta}(\lambda | X_{nN_I + k}, \theta_n)}{\sum_{n=0}^{N-1}\sum_{k=1}^{N_I}p_{X | \Lambda, \Theta}(\lambda | X_{nN_I + k}, \theta_n)}  
    = \frac{\hat \psi_N}{\hat 1_N} 
\end{align*}
converges almost surely to 
\[
   \frac{\sum_{x\in\mathcal{X}} \psi(x) p_{X,\Lambda | \Theta}(x,\lambda | \theta_*)}{\sum_{x\in\mathcal{X}}  p_{X,\Lambda | \Theta}(x,\lambda | \theta_*)} = \sum_{x\in\mathcal{X}}\psi(x)p_{X | \Lambda}(x | \lambda).
\]
This completes the proof. 
\end{proof}

\appendix

\section{Convergence of Stochastic Approximations}
In order to make this paper self-contained, some notations and statements used in \cite{KY03} are introduced in this Appendix.  Additionally, Theorem 6.1 from \cite[Chapter 6]{KY03} is stated, which is the main result that will be applied to show the almost sure convergence of the AWH algorithm in the proof of Theorem \ref{thm:asconvergence} in this paper.

Consider a stochastic approximation with projection onto a constraint set $H$ defined as follows:
\begin{align}\label{eqn:SA}
    \theta_{n+1}=\theta_n + \varepsilon_n Y_n +\varepsilon_n Z_n,
\end{align}
where $Y_n$ is the observation at time $n$ and $Z_n$ is the projection term. Define $\mathcal{F}_n$ as the $\sigma$-algebra determined by the initial condition $\theta_0$ and observations $Y_i,i<n$, and assume that
there exists measurable functions $g_n(\cdot,\cdot)$ such that
\begin{align}\label{eqn:SA2}
    E_n Y_n = g_n(\theta_n,\xi_n)+\beta_n,
\end{align}
where $E_n$ denotes the expectation conditioned on $\mathcal{F}_n$, $\xi_n$ denotes the correlated driving noise, and $\beta_n$ represents a bias which is asymptotically unimportant.

Next, we define martingale differences $\delta M_n\doteq Y_n - E_nY_n$ and use \eqref{eqn:SA2} to write \eqref{eqn:SA} as 
\begin{align}\label{eqn:SA3}
    \theta_{n+1}=\theta_n + \varepsilon_n [ g_n(\theta_n,\xi_n)+\delta M_n+\beta_n +\varepsilon_n Z_n].
\end{align}
Moreover, define $t_0=0$ and $t_n=\sum_{i=0}^{n-1}\varepsilon_i$. For $t\geq 0$, let $m(t)$ denote the unique value of $n$ such that $t_n\leq t< t_{n+1}$.  

With these notations, we can now state Theorem 6.1 from \cite[Chapter 6]{KY03}. We only state the part that is relevant for the present paper.
\begin{theorem}\label{thm_1} Consider $\{\theta_n\}$ defined by \eqref{eqn:SA3}. Assume 
\begin{itemize}
    \item (5.1.1) 
        $\sum_{n=0}\varepsilon_n = \infty, \varepsilon_n\geq 0,\varepsilon_n\rightarrow 0, \mbox{ for } n\geq 0.$ 
    \item (A1.1) $\sup_n E|Y_n|<\infty.$
    \item (A1.2) $g_n(\theta,\xi)$ is continuous in $\theta$ for each $\xi$ and $n$.
    \item (A1.4) For each $\mu>0$ and some $T>0$,
    \[
        \lim_{n\rightarrow\infty}P\left\{ \sup_{j\geq n}\max_{0\leq t\leq T}\left|\sum_{i=m(jT)}^{m(jT+t)-1}\varepsilon_i\delta M_i\right|\geq \mu\right\} = 0.
    \]
    \item (6.2) $P(\xi_{n+1}\in\cdot|\xi_i,\theta_i,i\leq n) = p(\xi_n,\cdot|\theta_n)$, where $p(\xi,\cdot|\theta)$ denotes the one-step transition probability with starting point $\xi$ and parameterized by $\theta$.
    \item (A6.1) There is a continuous function $\bar{g}(\cdot)$ such that for $\theta\in H$, the expression
    \[
        v_n(\theta,\xi_n) = \sum_{i=n}^{\infty} \varepsilon_i E_n [g_i(\theta,\xi_i(\theta))-\bar{g}(\theta)]
    \]
    is well defined when the initial condition for $\{\xi_i,i\geq n\}$ is $\xi_n(\theta)=\xi_n$, and $v_n(\theta_n,\xi_n)\rightarrow 0$ w.p.1.
    \item (A6.2) The asymptotic rates of change of the processes
    \[
        A^0(t) = \sum_{i=0}^{m(t)-1}\alpha_i \mbox{ and }N^0(t) = \sum_{i=0}^{m(t)-1}\delta N_i 
    \]
    are zero with probability one, where $\alpha_n = v_{n+1}(\theta_{n+1},\xi_{n+1})-v_{n+1}(\theta_{n},\xi_{n+1})$ and $\delta N_n = v_{n+1}(\theta_{n},\xi_{n+1})-E_n v_{n+1}(\theta_{n},\xi_{n+1})$. In other words, for $\omega$ not in some null set and for any positive $T$,
    \[
        \lim_n\sup_{j\geq n}\max_{0\leq t\leq T} |A^0(\omega,jT+t)-A^0(\omega,jT)|=0.
    \]
    and 
    \[
        \lim_n\sup_{j\geq n}\max_{0\leq t\leq T} |N^0(\omega,jT+t)-N^0(\omega,jT)|=0.
    \]
\end{itemize}
and the following constraint set condition
\begin{itemize}
    \item (A4.3.1) $H$ is a hyperrectangle. In other words, there are real numbers $a_i<b_i, i=1,\dots,r$, such that $H=\{x:a_i\leq x_i\leq b_i\}$. 
%    \item (A4.3.2) Let $q_i(\cdot), i =1,\dots,p$, be continuously differentiable real-valued functions on $\mathbb{R}^r$, with gradients $q_{i,x}(\cdot)$. Withous loss of generality, let $q_{i,x}(x)\neq 0$ if $q_{i}(x)=0$. Define $H=\{x:q_{i}(x)\leq 0,i=1,\dots, p\}$. Then $H$ is connected, compact and non-empty.
%    \item (A4.3.3) $H$ is an $\mathbb{R}^{r-1}$ dimensional connected compact surface with a continuously differentiable outer normal. In this case, define $C(x),x\in H$, to be the linear span of the outer normal at $x$.
\end{itemize}
Then for almost all $\omega$, $\{\theta_n(\omega)\}$ converges to some limit set of the ODE 
\[
    \dot{\theta}=\bar{g}(\theta) + z, z\in -C(\theta),%\footnote{The precise definition of $C(\theta)$ is in Section 4.3, Projected ODE on p.106.}
\] 
where $z$ is the minimum force needed to keep the solution in $H$.
\end{theorem}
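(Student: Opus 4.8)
The plan is to prove this by the ODE (Ljung--Kushner--Yin) method for projected stochastic approximations with correlated noise, exactly as developed in \cite[Chapter 6]{KY03}; I sketch only the architecture. First I would pass to continuous time: with $t_0=0$, $t_n=\sum_{i<n}\varepsilon_i$ and $m(t)$ as defined before the statement, form the piecewise-constant interpolation $\theta^0(t)=\theta_n$ for $t\in[t_n,t_{n+1})$ and the left-shifted processes $\theta^n(\cdot)=\theta^0(t_n+\cdot)$. The target is to show that the family $\{\theta^n(\cdot)\}$ is relatively compact in the space of $H$-valued reflected paths and that every subsequential limit solves the projected ODE $\dot\theta=\bar g(\theta)+z$, $z\in -C(\theta)$. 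Convergence of $\{\theta_n\}$ to a limit (invariant) set of this ODE then follows from the projected-ODE limit-set theory of \cite[Chapter 4]{KY03}.

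The recursion \eqref{eqn:SA3} decomposes into the drift $g_n(\theta_n,\xi_n)$, the martingale difference $\delta M_n$, the asymptotically negligible bias $\beta_n$, and the reflection $Z_n$. The martingale part is handled by (A1.4): its hypothesis says precisely that the interpolated noise $\sum\varepsilon_i\delta M_i$ has asymptotic rate of change zero over windows of length $T$, so it drops out of every limit trajectory. The bias is unimportant by assumption, while the $L^1$ bound (A1.1) together with continuity of $\bar g$ on the compact set $H$ supplies the control on the interpolated drift needed to extract, by an Arzel\`a--Ascoli/tightness argument adapted to reflected paths, convergent subsequences of $\{\theta^n(\cdot)\}$.

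The heart of the matter, and the step I expect to be the main obstacle, is replacing the correlated driving term $g_n(\theta_n,\xi_n)$ by its stationary average $\bar g(\theta_n)$; this is exactly what (A6.1) and (A6.2) accomplish through a perturbed-test-function argument. One introduces the discounted compensator $v_n(\theta,\xi)=\sum_{i\ge n}\varepsilon_i E_n[g_i(\theta,\xi_i(\theta))-\bar g(\theta)]$, a solution of an associated Poisson-type equation whose convergence rests on the geometric ergodicity of $\{\xi_n\}$ for frozen $\theta$. Adding $\varepsilon_n v_n(\theta_n,\xi_n)$ to $\theta_n$ and re-expanding the recursion converts the correlated noise into the predictable increments $\alpha_n$ and the martingale increments $\delta N_n$; (A6.2) asserts that the interpolations $A^0(\cdot)$ and $N^0(\cdot)$ of these pieces have zero asymptotic rate of change, while $v_n(\theta_n,\xi_n)\to 0$ from (A6.1) ensures the perturbation itself vanishes in the limit. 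Establishing that the geometric mixing delivers a convergent $v_n$ and the required rate-of-change estimates is the delicate bookkeeping, and is where almost all the work sits.

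With the noise thus averaged out, any subsequential limit $\theta(\cdot)$ of $\{\theta^n(\cdot)\}$ satisfies $\dot\theta=\bar g(\theta)+z$ with $z(t)\in -C(\theta(t))$ the minimal reflection keeping the path in the hyperrectangle $H$; here (A4.3.1) guarantees that $C(\theta)$ is generated by the outward normals at the active faces, so the projection term is well defined. Since this holds along every convergent subsequence and the interpolation increments $\varepsilon_n Y_n\to 0$, the family cannot escape the limit set of the projected ODE, and one concludes that for almost every $\omega$ the sequence $\{\theta_n(\omega)\}$ converges to a limit set of $\dot\theta=\bar g(\theta)+z$, which is the assertion.
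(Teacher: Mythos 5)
This theorem is not proved in the paper at all: it is Theorem 6.1 of \cite[Chapter 6]{KY03}, reproduced verbatim in the Appendix and used as a black box in the proof of Theorem \ref{thm:asconvergence}. There is therefore no in-paper argument to compare against; the relevant benchmark is the proof in Kushner--Yin, and your outline is a faithful recapitulation of exactly that route: piecewise-constant interpolation and shifted processes, equicontinuity/tightness of the reflected paths, elimination of the martingale term via (A1.4), the perturbed-test-function device built from $v_n$ to average out the state-dependent noise under (A6.1)--(A6.2), and the passage to the projected limit ODE on the hyperrectangle followed by the limit-set theorem of \cite[Chapter 4]{KY03}. As an architecture it is correct and is the same approach as the cited source.

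Two caveats. First, your sketch openly defers the substantive content --- the Arzel\`a--Ascoli argument for reflected interpolations, the precise expansion after adding $\varepsilon_n v_n(\theta_n,\xi_n)$, and the rate-of-change estimates for $A^0$ and $N^0$ --- to ``delicate bookkeeping,'' so it is an outline rather than a proof; that is acceptable here only because the result is being imported, not established. Second, you slightly conflate proving the abstract theorem with verifying its hypotheses: within Theorem \ref{thm_1} the well-definedness and convergence of $v_n$ and the vanishing rates of change are \emph{assumed} in (A6.1) and (A6.2), so geometric ergodicity of $\{\xi_n\}$ plays no role in the proof of the theorem itself. It enters only when one checks (A6.1)--(A6.2) for the AWH chain, which is what the paper actually does in the proof of Theorem \ref{thm:asconvergence}.
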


\begin{remark}
According to the last paragraph on p.137, for some positive number $T$, 
\begin{align}\label{eqn_1}
    \lim_n\sup_{j\geq n}\max_{0\leq t\leq T}|M^0(jT+t)-M^0(jT)|=0 \text{ w.p.1.}
\end{align}
is guaranteed by (A2.1) $\sup_n E|Y_n|^2<\infty$ and (A2.4) $\sum_i \varepsilon_i^2<\infty$, where 
\[
    M^0(t) = \sum_{i=0}^{m(t)-1}\varepsilon_i \delta M_i, \quad \delta M_n = Y_n-E_n Y_n.
\]
Observe that \eqref{eqn_1} can be written as 
\[
     \lim_n\sup_{j\geq n}\max_{0\leq t\leq T}\left|\sum_{i=m(jT)}^{m(jT+t)-1}\varepsilon_i \delta M_i\right|=0 \text{ w.p.1},
\]
which implies (A1.4).
\end{remark}

\section*{Acknowledgements}

Financial support for Guo-Jhen Wu from the Brummer \& Partners MathDataLab and the Swedish e-Science Research Center (SeRC)  Data Science MCP is gratefully acknowledged. The authors would like to thank Jack Lidmar for introducing us to the accelerated weight histogram algorithm and numerous discussions on related topics. 

\pagebreak

\bibliographystyle{plain}

%\bibliography{main}

\begin{thebibliography}{1}

\bibitem{bor3}
Vivek~S Borkar.
\newblock {\em Stochastic Approximation: A Dynamical Systems Viewpoint}.
\newblock Cambridge Univ. Press, 2008.

\bibitem{kusyin}
H.~Kushner and G.G. Yin.
\newblock {\em Stochastic Approximation and Recursive Algorithms and
  Applications}.
\newblock Stochastic Modelling and Applied Probability. Springer New York,
  2003.

\bibitem{lid}
Jack Lidmar.
\newblock Improving the efficiency of extended ensemble simulations: The
  accelerated weight histogram method.
\newblock {\em Phys. Rev. E}, 85:056708, May 2012.

\end{thebibliography}


\begin{thebibliography}{10}







\bibitem{AG07} 
Asmussen, S.\ and Glynn, P.W.: \emph{Stochastic simulation}, Springer, New York, 2007.

\bibitem{AdFDI03} 
Andrieu, C., de Freitas, N., Doucet, A., and Jordan, M.I.: An introduction to MCMC for machine learning. \emph{Machine Learning}, 50(1), 5--43, 2003.

\bibitem{AMP05} 
Andrieu, C., Moulines, E., and Priouret, P.: Stability of stochastic approximations under verifiable conditions. \emph{SIAM J. Control Optim.} 44(1), 283-312, 2005. 



\bibitem{BP07}
Belardinelli, R.E.\  and V. D. Pereyra, V.D.: 
Fast algorithm to calculate density of states. \emph{Phys. Rev. E} 75, 046701, 2007. 


\bibitem{B00}
Berg, B.A.: Introduction to multicanonical Monte Carlo simulations, \emph{Monte
Carlo Methods}, Fields Inst. Commun. 26, N. Madras (ed.), (American Mathematical
Society) p.1-24, 2000.

\bibitem{BC92}
Berg, B.A.\ and Celik, T.: New approach to spin-glass simulations,
\emph{Phys. Rev. Lett.} 69, 2292, 1992.

\bibitem{BN91}
Berg, B.A. and Neuhaus, T.: Multicanonical algorithms for first order
phase transitions, \emph{Phys. Lett.} B267, p. 249, 1991.

\bibitem{BN92}
Berg, B.A. and Neuhaus, T.: Multicanonical ensemble: a new approach
to simulate first-order phase transitions, \emph{Phys. Rev. Lett.} 68, 9-12, 1992.

\bibitem{DLP12}
Dupuis, P., Liu, Y., Plattner, N., and Doll, J.D.: On the infinite swapping limit for parallel tempering, \emph{Multiscale Model. Simul.}, 10(3), 986--1022, 2012. 

\bibitem{DPF12}
Doll, J.P., Plattner, N., Freeman, D.L., Liu, Y., and Dupuis, P.: Rare-event sampling: occupation-based performance measures for parallel tempering and infinite swapping, \emph{J. Chem. Phys.}, 137, 2012.  

\bibitem{DW20}
Dupuis, P., Wu, G.-J.: Large deviation properties of the empirical measure of a metastable small noise diffusion, \emph{J. Theor. Probab.}, 2021. 

\bibitem{DW21}
Dupuis, P., Wu, G.-J.: Analysis and optimization of certain parallel Monte Carlo methods in the low temperature limit, \emph{Submiited}, 2021. 

\bibitem{FJKLS15}
Fort, G., Jourdain, B., Kuhn, E., Leli\'evre, T., and Stoltz, G.:
Convergence of the Wang-Landau algorithm, \emph{Mathematics of Computation}, 84(295), 2297-2327, 2015.

\bibitem{GRS96}
Gilks, W.R., Richardson, S.,  and Spiegelhalter, D.J.: \emph{Markov chain Monte Carlo in practice}, Chapman and Hall,
London, 1996.

\bibitem{HO99}
Hansmann, U.H.E.,  and Okamoto, Y.: New Monte Carlo algorithms for
protein folding, \emph{Current Opinion in Structural Biology} 9, 177-183, 1999.


\bibitem{HvEK92}
Hooft, R.W.W.,  van Eijck, B.P., and Kroon, J.: An adaptive umbrella
sampling procedure in conformational analysis using molecular dynamics and
its application to glycol, \emph{J. Chem. Phys.} 97, 6690-6694, 1992.


\bibitem{KD86}
Kushner, H. and  Dupuis, P.: Constrained stochastic approximation via the theory of large deviations. \emph{Adaptive statistical procedures and related topics}, Institute of Mathematical Statistics, Hayward, CA, pp.\ 253--265, 1986. 

\bibitem{KY03}
Kushner, H. and Yin, G.: \emph{Stochastic approximation and recursive algorithms and applications}, Springer, New York, 2003. 


\bibitem{LRS07}
Lelievre, T.,  Rousset, M., and  Stoltz, G.: Computation of free energy profiles with adaptive
parallel dynamics, \emph{J. Chem. Phys.} 126, 134111, 2007.



\bibitem{Lidmar12}
Lidmar, J.: Improving the efficiency of extended ensemble simulations : The accelerated weight histogram method, \emph{Physical Review E. Statistical, Nonlinear, and Soft Matter Physics}, 85(5), 056708, 2012.

\bibitem{LLH14}
Lindahl, V.,  Lidmar, J., and  Hess, B.:  Accelerated weight histogram method for exploring free energy landscapes. \emph{J. Chem. Phys.}, 141. 10.1063/1.4890371, 2014. 

\bibitem{LLH18}
Lindahl, V.,  Lidmar, J., and  Hess, B.: Riemann metric approach to optimal sampling of multidimensional free-energy landscapes, \emph{Physical review. E}, 98(2), 2018.

\bibitem{ljung77}
Ljung, L.:  Analysis of recursive stochastic algorithms, \emph{IEEE Trans. Automatic Control}, 22, 551-575, 1977.

\bibitem{MP92}
Marinari, E.\ and Parisi, G.: Simulated tempering: a new Monte Carlo
scheme, \emph{Europhys. Lett.} 19, 451, 1992.


 \bibitem{M87}
Mezei, M.: Adaptive umbrella sampling: Self-consistent determination of
the non-Boltzmann bias, \emph{J. Comp. Phys.} 68, 237-248, 1987.

\bibitem{NOKW19}
No\'e, F., Olsson, S., K\"ojler, J., and Wu, H.:  Boltzmann generators: Sampling equilibrium states of many-body systems with deep learning, \emph{Science,}
Vol. 365, Issue 6457, eaaw1147, 06 Sep 2019. 

\bibitem{RC04}
Robert, C.P.\ and Casella, G.: \emph{Monte Carlo statistical methods}, Springer, New York, 2004.  

\bibitem{RR04}
Robert, C.P.\ and Rosenthal, J.: 
General state space Markov chains and
MCMC algorithms, \emph{Probab. Surv.}, 1, 20-71, 2004.  

\bibitem{SW86}
Swendsen, R.H.\ and Wang, J.S.: Replica Monte Carlo simulation of
spin-glasses, \emph{Phys. Rev. Lett.} 57,  2607--2609, 1986.

\bibitem{UT96}
Urakami, N.  and Takasu, M.: Multicanonical Monte Carlo simulation of
a polymer with stickers, \emph{J. Phys. Soc. Jpn} 65, p.\ 2694, 1996.

\bibitem{WL01}
Wang, F.\ and Landau, D.P.: Efficient, multiple-range random walk algorithm to calculate the density of states. 
\emph{Phys. Rev. Lett.} 86(2050), 2001. 


\bibitem{ZM07}
Zhang, C.\  and Ma, J.: Simulation via direct computation of partition functions, \emph{Physical Review E}, 76, 036708, 2007.



\end{thebibliography}

\end{document}